\theoremstyle{plain}
\newtheorem{lemma}{Lemma}
\newtheorem{definition}{Definition}
\newtheorem{corollary}{Corollary}
\newtheorem{proposition}{Proposition}
\newtheorem{theorem}{Theorem}
\newtheorem{conjecture}{Conjecture}
\newtheorem{question}{Question}
\newtheoremstyle{derp}
{3pt}
{3pt}
{}
{}
{\upshape}
{:}
{.5em}
{}
\theoremstyle{derp}
\newtheorem{example}{Example}
\newcommand{\R}{\mathbb{R}}
\newcommand{\Z}{\mathbb{Z}}
\newcommand{\N}{\mathbb{N}}
\newcommand\xqed[1]{%
  \leavevmode\unskip\penalty9999 \hbox{}\nobreak\hfill
  \quad\hbox{#1}}
\newcommand\qee{\xqed{$\fullmoon$}}
\newcommand{\Fin}{\mathrm{Fin}}
\newcommand{\FinG}{\mathrm{FinGoE}}
\newcommand{\one}[1]{#1}
\newcommand{\two}[1]{\tilde{#1}}
\newcommand{\dd}[2]{\begin{matrix} #1 \\ #2 \end{matrix}}
\newcommand{\lang}[1]{\mathcal{L}(#1)}
\newcommand{\pad}{\mathrm{pad}_0}
\newcommand{\conf}{\mathrm{conf}_0}
\tikzset{
    zero color/.initial=white,
    zero color/.get=\zerocol,
    zero color/.store in=\zerocol,
    one color/.initial=red,
    one color/.get=\onecol,
    one color/.store in=\onecol,
    two color/.initial=blue,
    two color/.get=\twocol,
    two color/.store in=\twocol,
    cell wd/.initial=1,
    cell wd/.get=\cellwd,
    cell wd/.store in=\cellwd,
    cell ht/.initial=1,
    cell ht/.get=\cellht,
    cell ht/.store in=\cellht,
}
\newcommand{\drawgrid}[2][]{
\medskip
\begin{tikzpicture}[#1]
  \pgfplotstablegetrowsof{#2} 
  \pgfmathtruncatemacro{\totrow}{\pgfplotsretval}
  \pgfplotstablegetcolsof{#2} 
  \pgfmathtruncatemacro{\totcol}{\pgfplotsretval}
  
  \pgfplotstableforeachcolumn#2\as\col{
    \pgfplotstableforeachcolumnelement{\col}\of#2\as\colcnt{%
      \ifnum\colcnt=0
        \fill[\zerocol]($ -\pgfplotstablerow*(0,\cellht) + \col*(\cellwd,0) $) rectangle+(\cellwd,\cellht);
      \fi
      \ifnum\colcnt=1
        \fill[\onecol]($ -\pgfplotstablerow*(0,\cellht) + \col*(\cellwd,0) $) rectangle+(\cellwd,\cellht);
      \fi
      \ifnum\colcnt=2
        \fill[\twocol]($ -\pgfplotstablerow*(0,\cellht) + \col*(\cellwd,0) $) rectangle+(\cellwd,\cellht);
      \fi
    }
  }
\end{tikzpicture}
\medskip
}
\title{Gardens of Eden in the Game of Life}
\author{
  Ville Salo
  \footnote{Author supported by Academy of Finland grant 2608073211.}
  \ \ \ \ \ \ and\ \ \ \ \
  Ilkka T\"orm\"a
  \footnote{Author supported by Academy of Finland grant 295095.}
  \\
  Department of Mathematics and Statistics \\
  University of Turku, Finland \\
  \{\texttt{vosalo}, \texttt{iatorm}\}\texttt{@utu.fi}
}
\begin{document}
\maketitle

\begin{abstract}
We prove that in the Game of Life,
if the thickness-four zero-padding of a rectangular pattern is not an orphan, then the corresponding finite-support configuration is not a Garden of Eden, and that
the preimage of every finite-support configuration has dense semilinear configurations. 
 In particular finite-support Gardens of Eden are in co-NP.
\end{abstract}

\section{Introduction}

The Game of Life is a two-dimensional cellular automaton defined by John Conway in 1970~\cite{Ga70}.
It consists of an infinite grid of cells, each of which is either dead or alive, and a discrete time dynamics defined by a simple local rule: a live cell survives if it has 2 or 3 live neighbors, and a dead cell becomes live if it has exactly 3 live neighbors.
We study the \emph{Gardens of Eden} of the Game of Life, that is, configurations that do not have a predecessor in the dynamics.
The following theorem is our main result.

\begin{theorem}
The set of finite-support Gardens of Eden for the Game of Life is in co-NP under the encoding where the input is a rectangular pattern containing all live cells.
\end{theorem}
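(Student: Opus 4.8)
The plan is to establish co-$\NP$ membership by exhibiting, for the complementary property that a configuration is \emph{not} a Garden of Eden, a certificate of polynomial size that can be checked in polynomial time; since co-$\NP$ is exactly the class of languages whose complement lies in $\NP$, this suffices. Write $P$ for the input rectangular pattern and $c_P$ for the finite-support configuration that equals $P$ on its bounding box and is dead everywhere else, so that the thickness-four zero-padding $\pad(P)$ is literally the restriction of $c_P$ to the rectangle $R$ obtained by enlarging the bounding box of $P$ by four cells on each side. The certificate I would use is a \emph{local predecessor} of $\pad(P)$: a finite pattern $Q$ on the one-cell enlargement $R^{+1}$ of $R$ whose image under the local Life rule, restricted to $R$, equals $\pad(P)$.

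Correctness of this certificate rests on the equivalence
\[
  c_P \text{ is a Garden of Eden} \iff \pad(P) \text{ is an orphan}.
\]
One direction is immediate: if $c_P$ has a global predecessor $d$ on $\Z^2$, then the restriction $d|_{R^{+1}}$ is a local predecessor of $\pad(P)$, because the Life image on $R$ depends only on the preimage values in $R^{+1}$ and equals $c_P|_R = \pad(P)$; hence $\pad(P)$ is not an orphan, and this direction uses nothing about the thickness of the padding. The reverse direction is precisely the main result quoted in the abstract: if $\pad(P)$ is not an orphan, then $c_P$ is not a Garden of Eden. Granting it, $c_P$ fails to be a Garden of Eden if and only if $\pad(P)$ admits some local predecessor $Q$, which is exactly the certificate above.

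It remains to bound the resources. Padding adds only a constant-width border, so $R$ and $R^{+1}$ have side lengths linear in those of $P$, and a candidate $Q$ has size polynomial in the input; thus the certificate has polynomial length. Verification amounts to applying the local update rule to $Q$ at every cell of $R$ and comparing the outcome with $\pad(P)$, costing $O(|R|)$ cell evaluations and therefore polynomial time. Hence the non-Garden-of-Eden property lies in $\NP$, and the Garden-of-Eden property lies in co-$\NP$.

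The genuine difficulty is concealed in the reverse implication of the displayed equivalence, which is the main technical work supporting the theorem rather than part of this reduction. There one must convert a purely local object — a predecessor $Q$ of the bounded pattern $\pad(P)$ — into an honest global predecessor of $c_P$ on all of $\Z^2$, which forces one to fill the exterior of $R^{+1}$ with a configuration mapping to the all-dead background while staying consistent with $Q$ across the boundary. The four rings of padding are exactly what make this gluing possible: they provide enough buffer to interpolate between the boundary behaviour of $Q$ and an all-dead (or periodic vacuum) exterior without creating any spurious live cell in the image, and showing that four layers suffice requires a careful case analysis of the Life rule along the seam. I expect this extension step — plausibly effected by invoking the second result of the abstract, that preimages of finite-support configurations contain dense semilinear configurations, to supply a finitely describable global predecessor agreeing with $Q$ in the interior — to be the main obstacle.
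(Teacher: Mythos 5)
Your proposal is correct and takes essentially the same route as the paper: the paper derives the theorem from Theorem~\ref{thm:padded} (equivalently, the third bullet of Theorem~\ref{thm:Main}, a polynomial-time reduction of $\FinG(g)$ to rectangular orphans), with exactly your certificate --- a preimage pattern of the thickness-$4$ padding, checked by one application of the local rule. Your closing speculation about how the hard direction is proved is slightly off (the paper establishes it via stable traces and Lemma~\ref{lem:StableImplies} with the computer-verified constant $C=0$ of Lemma~\ref{lem:GoLConstant}, not via the density of semilinear preimages, which is the paper's \emph{other}, independent route through Lemma~\ref{lem:PeriodizableImplies}), but since you invoke that direction as a quoted black box, this does not affect the correctness of your reduction.
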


Equivalently, the input can specify the values in any area of polynomial diameter, since we can always extend the domain to a rectangular one. 
A configuration has \emph{finite support} if it contains a finite number of live cells. Finite-support Gardens of Eden should be contrasted with the \emph{orphans}, which are patterns of finite domain that do not appear in the image subshift of the Game of Life. The set of orphans is in co-NP for trivial reasons.

This result is a corollary of either of the following theorems.
In the first case, we also need some quantitative details of the proof.

\begin{theorem}
\label{thm:dense}
Let $g$ be the Game of Life cellular automaton. If $y$ is a configuration with finite support, then semilinear configurations are dense in $g^{-1}(y)$.
\end{theorem}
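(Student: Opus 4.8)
The plan is to exploit the fact that $g^{-1}(y)$ is a subshift of finite type: since $y$ has support in some box $B$, a configuration $x$ lies in $g^{-1}(y)$ exactly when it satisfies finitely many local constraints around $B$ together with the single translation-invariant constraint $g(x)_v=0$ at every $v\notin B$. Thus outside $B$ the preimage set is governed by the SFT $g^{-1}(0)$ of ``stable-death'' configurations. Fixing $x\in g^{-1}(y)$ and a window $W\supseteq B$, I want a semilinear $x'\in g^{-1}(y)$ with $x'|_W=x|_W$. First I would record two easy reductions: a finite-support configuration is semilinear, so truncating $x$ to zero far away would already suffice whenever that is legal (it is not always, e.g. a full background that dies only because of overcrowding cannot be cut sharply); and a finite union of a bounded pattern with singly- or doubly-periodic patterns is again semilinear, which is the general shape of the configuration I aim to build.

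The engine is the gluing underlying the thickness-four theorem stated above: wherever the image is zero on a band of thickness four, a preimage may be cut and reglued to any other preimage agreeing on that band, because the zero image forces enough local consistency to absorb the seam. Since $g(x)=0$ on the whole exterior of $B$, such bands are available everywhere outside $B$. Keeping $x$ on a large box $D\supseteq W$ and on a thickness-four inner frame of its complement, I am then free to replace $x$ outside $D$ by \emph{any} configuration $z$ with $g(z)=0$ that matches the prescribed frame values. This reduces the theorem to a one-sided extension problem: extend fixed thickness-four frame data to a semilinear exterior configuration that maps to $0$.

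To produce such an extension I would periodize by a pumping argument, treating the four half-strips around $D$ separately and filling the four corner quadrants with a fixed background. The difficulty, and the step I expect to be the main obstacle, is that the radial transfer system has the infinite transverse line as its ``state'', so naive pigeonholing fails. I would resolve this by using the same thickness-four decoupling a second time, now in the transverse direction: beyond a bounded transverse distance I glue in a periodic background (for instance the all-zero configuration, which maps to $0$) along a zero band, so that only a bounded-width strip carries non-trivial content. Over a strip of bounded width the radial transfer matrix is \emph{finite}, so pigeonhole produces two radial positions with equal state, and repeating the intervening block makes the strip eventually periodic in the radial direction.

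Finally I would assemble the pieces: the kept box $D$, four eventually-periodic bounded-width strips, and periodic (e.g.\ zero) corners and transverse backgrounds, all glued across thickness-four zero bands. The resulting live set is a finite modification of a finite union of singly- and doubly-periodic sets, hence semilinear, and by construction $g(x')=y$ with $x'|_W=x|_W$. Tracking the strip widths, the pumping period, and the buffer thickness through this argument is precisely the ``quantitative details'' alluded to for the co-NP consequence, since they bound the size of a semilinear description of $x'$ polynomially in the input.
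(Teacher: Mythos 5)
Your overall architecture (keep a large box around the support, re-choose the exterior freely, periodize the four directions, then pigeonhole a bounded-width strip into radial periodicity) matches the skeleton of the paper's proof, which establishes this theorem via Lemma~\ref{lem:PeriodizableImplies} applied to Theorem~\ref{thm:GoLPeriodizable}. But the step you yourself flag as the main obstacle is exactly where your argument breaks, and your fix for it is invalid. Gluing two preimages across a band requires that they \emph{agree on the band in the preimage} (for a radius-$1$ CA, agreement on a width-$2$ band already suffices, and the image being zero there plays no role); a thickness-four band of $0$s \emph{in the image} gives no license to cut and paste preimage content. In particular, ``beyond a bounded transverse distance I glue in a periodic background (for instance the all-zero configuration) along a zero band'' is precisely what cannot be done: the SFT $g^{-1}(0^{\Z^2})$ has long-range forcing, and Proposition~\ref{prop:AsympPeriodicNotDense} exhibits patterns $P_n$ in it that force the entire columns through them to continue with a specific \emph{nonzero} $3$-periodic content, so no all-zero (nor any other fixed, pre-chosen) background can be spliced in at bounded distance from arbitrary frame data. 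This same forcing is why the theorem fails when ``semilinear'' is replaced by ``co-finite-support''. Relatedly, your reading of Theorem~\ref{thm:padded} as ``the zero image forces enough local consistency to absorb the seam'' mischaracterizes it: that theorem is itself a consequence of the computer-verified stable-traces property, not a soft gluing principle, and even as a black box it only yields existence of \emph{some} preimage of $\conf(P)$, not the freedom to substitute exteriors or backgrounds.

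What is missing is the paper's key enabling lemma, Theorem~\ref{thm:GoLPeriodizable}: every height-$2$ stripe that extends to a half-plane preimage of $0^{\Z^2}$ also extends to one that is vertically $3$-periodic after $3$ rows, i.e.\ $T_2(F) = S_{2,4}(F) \subset P_{2,3,3}(F)$, in all four rotations. This is established by a finite-automata computation, and it is genuinely delicate — the approximate traces $S_{2,\ell}(F)$ only stabilize at $\ell = 4$, and the separating word there is essentially your obstruction $P_n$ — so it cannot be replaced by a generic gluing or compactness argument. Given that lemma, the paper periodizes the half-planes north, south, west and east of the support box in turn (no transverse truncation needed, since the trace statement applies to whole infinite stripes), and only then does a pigeonhole: the seams between $A_{\mathrm{north}}$ and the already-periodized $A_{\mathrm{west}}, A_{\mathrm{east}}$ have controlled width $k+p$, so the far-north row-stripes take at most $q = p|A|^{2n(k+p)}$ values and a repeated block yields eventual vertical periodicity, essentially as in your last step. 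So your plan's final assembly and pigeonhole are sound, but without a proof of the uniform periodizability of the one-sided trace — which for the Game of Life appears to require computation — the transverse decoupling step has no support and the argument does not close.
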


\begin{theorem}
\label{thm:padded}
Let $g$ be the Game of Life cellular automaton, and let $P$ be a rectangular pattern. If the thickness-$4$ zero-padding of $P$ admits a preimage $Q$, then the finite-support configuration $y$ corresponding to $P$ has a preimage.
\end{theorem}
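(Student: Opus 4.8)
The plan is to reduce the statement to a single cut-and-paste surgery: once we have \emph{any} local preimage of the thickness-$4$ padding, I will build a global preimage of $y$ by keeping that preimage in the middle and filling all of $\Z^2$ outside with dead cells. For this to define a legal preimage the only danger is at the seam between the two regions, so the real work will be to first arrange that the preimage is already all-dead in a thin annulus near the outer boundary of the padded region, after which the seam becomes trivial. Let $R\subseteq\Z^2$ be the rectangle carrying $P$, and for $k\ge 1$ let $F_k$ be the ring of cells at $\ell^\infty$-distance exactly $k$ from $R$. Then the padding $\hat P$ is supported on $\hat R=R\cup F_1\cup\cdots\cup F_4$, equal to $P$ on $R$ and dead on the four rings; a preimage $Q$ lives on $\hat R\cup F_5$ and satisfies $g(Q)=\hat P$ on $\hat R$. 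The target configuration $y$ equals $\hat P$ on $\hat R$ and is dead everywhere else.

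\emph{Reduction.} Suppose we can find a preimage $Q$ of $\hat P$ with $Q\equiv 0$ on the outer two rings $F_4\cup F_5$. Define $x\in\{0,1\}^{\Z^2}$ by $x=Q$ on $R\cup F_1\cup\cdots\cup F_5$ and $x=0$ elsewhere. I claim $g(x)=y$, which I check in two cases according to the $\ell^\infty$-distance $d$ of a cell $c$ from $R$. If $d\le 4$ then every neighbour of $c$ lies at distance $\le 5$, so the Moore neighbourhood $N(c)$ is contained in $R\cup F_1\cup\cdots\cup F_5$, where $x$ agrees with $Q$; hence $g(x)(c)=g(Q)(c)=\hat P(c)=y(c)$. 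If $d\ge 5$ then $N(c)$ meets only the rings $F_4,F_5,F_6,\dots$, and there $x$ is identically dead: on $F_4\cup F_5$ by the hypothesis on $Q$, and beyond $F_5$ by construction. Thus $g(x)(c)=0=y(c)$. So $x$ is a global preimage of $y$, and the theorem follows. Note that the vanishing of $Q$ on $F_4\cup F_5$ is used exactly once, in the $d\ge 5$ case, to clear all live cells out of the seam.

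\emph{The main step and the obstacle.} It remains to upgrade an arbitrary preimage $Q_0$ of $\hat P$ to one that vanishes on $F_4\cup F_5$, and this is where the thickness $4$ is spent. Simply erasing $Q_0$ on $F_4\cup F_5$ is \textbf{not} safe: deleting live cells lowers neighbour counts, and a dead cell in $F_3$ whose count drops to exactly $3$ produces a spurious birth, while a live cell there dropping to $2$ or $3$ produces a spurious survival, either of which breaks $g=\hat P$ on $F_3$. The plan instead is to rebuild the preimage from the outside in, exploiting that $\hat P$ carries \emph{four} dead rings: the identities $g(Q_0)=0$ on $F_1,\dots,F_4$ furnish four successive layers of local constraints, and I will re-choose the values on the two transition rings $F_2,F_3$ so that the preimage can be tapered to all-dead by $F_4$ while the images on $F_1,F_2,F_3$ remain zero and the image on $R$ is left untouched. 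The discrepancy between four dead image-rings and the desired three support-rings is precisely the radius-$1$ slack of the Game of Life, which absorbs the cross-seam interaction. The hard part will be proving that such a tapering always exists—i.e. that two transition rings are always enough to interpolate between the live ``halo'' forced by $P$ near $F_1$ and the all-dead exterior without ever creating a birth or an unwanted survival—and I expect to settle this by a finite case analysis of the local rule along the boundary rings rather than by any global or compactness argument.
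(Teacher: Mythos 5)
Your cut-and-paste reduction is fine, but the ``main step'' it rests on --- upgrading an arbitrary preimage $Q_0$ of $\pad^4(P)$ to one that is identically dead on the two outer rings while keeping $R\cup F_1$ (hence the image on $R$) fixed --- is not just unproven, it is false, and no finite case analysis along the boundary will rescue it. The paper's Proposition~\ref{prop:AsympPeriodicNotDense} exhibits height-$2$ patterns $P_n$ in $g^{-1}(0^{\Z^2})$ whose columns admit a \emph{unique} continuation mapping to $0$, namely a vertically $3$-periodic one with infinitely many live cells. Take $P$ to be a large all-zero rectangle and $Q_0$ a preimage of its padding containing $P_n$ horizontally in the top row of $R$ together with the $F_1$ cells above it (such a $Q_0$ exists: continue $P_n$ $3$-periodically, fill with $0$s, and restrict to $\hat R\cup F_5$). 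Every extension of this fixed data upward whose image is dead is forced to be the non-vanishing periodic one, so no re-choice of $F_2,F_3$ --- indeed no tapering within any finite number of rings --- can reach an all-dead ring. And if you weaken the plan to allow re-solving everything outside $R$, you are now asking whether $y$ has a \emph{finite-support} preimage, which is exactly Question~\ref{q:DoesEvery} of the paper and is open; the paper notes explicitly that Proposition~\ref{prop:AsympPeriodicNotDense} rules out obtaining such a preimage by modifying an arbitrary preimage outside a finite region.

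The missing idea is that the exterior continuation must be allowed to be infinite and merely \emph{periodic}, not dead. The paper's proof takes the two boundary rows of the preimage along each side of the rectangle, observes that by hypothesis they extend legally by $\ell=4$ further rows (so they lie in $\lang{S_{2,4}(F)}$, with no horizontal margin needed since $C=0$ by Lemma~\ref{lem:GoLConstant}), and then invokes the computer-verified stability $S_{2,4}(F)=S_{2,5}(F)=T_2(F)$ (Theorem~\ref{thm:GoLPeriodizable} with Lemma~\ref{lem:StableImpliesActualTrace}) to extend each side to a full half-plane mapping to $0$, doing the four sides in sequence as in Lemma~\ref{lem:StableImplies}. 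So the genuinely non-elementary content is a trace-stability fact established by exhaustive automaton computation, not a local interpolation across two transition rings; the quantity $4$ in the hypothesis is the stabilization level $\ell$ of the trace hierarchy, not a budget of rings to be consumed by a taper.
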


Theorem~\ref{thm:dense} is not true if ``semilinear'' is replaced with ``finite-support'' or ``co-finite-support''.
In Theorem~\ref{thm:padded} we do not know whether the optimal constant is $4$, but it is at least $1$. We state a more precise result in Theorem~\ref{thm:Main}, which additionally guarantees that we need not modify the preimage of the padding of $P$ in the part that maps to $P$, to obtain a preimage for the finite-support configuration $y$. For this stronger fact, the thickness $4$ is optimal, and no convex compact shape other than a rectangle can be used.

The seed of the proofs of the above theorems is the following property of the Game of Life: the set of rectangular patterns of height $2$ that can be extended to a preimage of the all-$0$ configuration (the \emph{trace} of the subshift of finite type $g^{-1}(0^{\Z^2})$) is a regular language, and all such patterns can be extended so that the preimage is vertically $3$-periodic everywhere except within distance $3$ of the pattern.
The essential idea in the proofs of both of the theorems above is to apply this result on all sides of a rectangle containing the desired image, to change the preimage to a ``better'' one.
Once the regularity of the trace language has been established, the periodic extension property is decidable using automata theory, and we decide both by computer. The proof did not seem to be within reach of any standard library we attempted to use, so we used pure Python instead, and our program is included for verification in Appendix~\ref{sec:Program}. We expect that some efficient enough standard libraries could solve this problem out of the box.

The technical lemmas of this paper are stated for general two-dimensional cellular automata, and the computed-assisted part can be adapted to a general cellular automaton with little work by modifying the attached program. One obtains analogues of the above theorems for any cellular automaton where the preimage of the all-zero configuration has one-sided stable/periodizable traces.
Our methods to not generalize to higher-dimensional cellular automata.

This paper arose from the answer of Oscar Cunningham to the question \cite{Zy19} on MathOverflow, which in turn arose from the thread \cite{OrphanvsGoE} on the ConwayLife forum. Cunningham asked whether the finite-support Gardens of Eden of the Game of Life cellular automaton form a decidable set. It follows from compactness that they are a semidecidable set for any cellular automaton (as every Garden of Eden contains an orphan), but there is no general-purpose semidecision algorithm for the other direction, in the sense that there exist two-dimensional cellular automata whose finite-support Gardens of Eden are undecidable. Our results of course solve the problem for the Game of Life.

\begin{corollary}
Finite-support Gardens of Eden for the Game of Life are a decidable set, under any natural encoding.
\end{corollary}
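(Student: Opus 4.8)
The plan is to read off decidability as an immediate consequence of our main theorem, which already places the set in co-NP under the rectangular-pattern encoding, and then to observe that decidability is insensitive to the precise (reasonable) way we encode a finite-support configuration. No new mathematics beyond the co-NP membership is needed; the corollary is purely a matter of unwinding definitions.

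First I would invoke the elementary containment that co-NP $\subseteq$ EXPTIME, so in particular every co-NP set is decidable. Concretely, a co-NP predicate can be written as ``for all certificates $w$ with $|w| \le p(n)$, a fixed polynomial-time verifier accepts the pair $(x,w)$'', and a decision procedure merely enumerates the finitely many candidate certificates of length at most $p(n)$ and runs the verifier on each, accepting iff all runs accept. Applying this to our main theorem, the set of finite-support Gardens of Eden is decidable under the encoding in which the input is a rectangular pattern containing (and padded by dead cells around) all live cells.

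It remains to justify the phrase ``any natural encoding.'' A finite-support configuration $y \in \{0,1\}^{\Z^2}$ is determined by the finite set $\supp(y)$ of its live cells, and being a Garden of Eden is a property of $y$ alone. Any natural encoding — a binary list of the coordinates in $\supp(y)$, the values of $y$ on an explicitly given finite window containing $\supp(y)$, a run-length or quadtree description, and so on — is computably interconvertible with the rectangular-pattern encoding: from live-cell data one computes the minimal bounding box of $\supp(y)$ and writes out the induced rectangular pattern, and conversely a rectangular pattern is trivially rewritten in any of these forms. Composing such a computable translation with the decision procedure of the previous paragraph yields a decision procedure under the new encoding, so the set is decidable under every encoding computably equivalent to the rectangular one. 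I do not expect a genuine obstacle here; the only point demanding care is the formalization of ``natural encoding,'' which I would pin down as ``computably interreducible with the rectangular-pattern encoding,'' after which the argument is routine and the substantive work has already been carried out in establishing co-NP membership.
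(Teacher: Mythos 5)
Your proposal is correct and matches the paper's (implicit) reasoning: the paper states the corollary without a separate proof precisely because decidability is immediate from the co-NP membership of Theorem~1 (equivalently, from the finite orphan-check on the thickness-$4$ padding given by Theorem~\ref{thm:padded}), together with the observation that any reasonable encoding of a finite-support configuration is computably interconvertible with the rectangular-pattern encoding. Your one point of care --- that conversions such as binary coordinate lists may blow up exponentially but remain computable, which suffices for decidability though not for the co-NP bound --- is exactly the distinction the paper itself draws when it restricts the co-NP claim to polynomial-diameter encodings.
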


The methods of this article are special cases of a more general technique we are studying with Pierre Guillon. Trace methods are a common tool in multidimensional symbolic dynamics and cellular automata theory \cite{Pa12,PaSc15,Le06}. See \cite{Ga83,Ad10,Re02} for discussion of various aspects and generalizations of the Game of Life.

\section{Definitions}

We include $0 \in \N$, and unless otherwise noted, all intervals are discrete: $[a, b] = [a, b] \cap \Z$.

For $A$ a finite set with a special element $0 \in A$ called the \emph{zero}, a $d$-dimensional \emph{configuration} over $A$ is an element $x \in A^{\Z^d}$.
The elements of $\Z^d$ are called \emph{cells}, and the value of a cell $\vec v$ in $x$ is denoted $x_{\vec v}$.
The set $A^{\Z^d}$ of all configurations is the \emph{$d$-dimensional full shift}, and we give it the prodiscrete (Cantor) topology.
The additive group $\Z^d$ acts on $A^{\Z^d}$ by shifts: $\sigma^{\vec v}(x)_{\vec w} = x_{\vec v + \vec w}$.
A configuration $x$ is \emph{finite-support} if $x_{\vec v} = 0$ for all but a finite number of cells $\vec v \in \Z^d$, and we denote by $\Fin(A)$ the set of finite-support configurations. If $X \subset A^\Z$ is a one-dimensional subshift, write $\lang{X} = \{w \in A^* \;|\; \exists x \in X: x|_{[0,|w|-1]} = w\}$ for its \emph{language}, the set of (finite-length) words that can be extended to infinite configurations in $X$.

A \emph{pattern} is an element $P \in A^D$, where $D = D(P) \subset \Z^d$ is the \emph{domain} of $P$, and we say $P$ is a \emph{(finite) pattern} if it has a finite domain. If $P \in A^D$ then $\sigma^{\vec v}(P) \in A^{D - \vec v}$ is defined by $\sigma^{\vec v}(P)_{\vec u} = P_{\vec u + \vec v}$. If $P \in A^D$ is a pattern, write $\pad^c(P) \in A^{D + [-c, c]^d}$ for the pattern together with a zero-padding of thickness $c$ on all sides, and $\conf(P)$ is the finite-support configuration corresponding to $P$ defined by $x_{\vec v} = P_{\vec v}$ for $\vec v \in D$, and $x_{\vec v} = 0$ otherwise. 
If $P \in A^D$ and $Q \in A^E$ are two patterns, write $P \sqsubset Q$ if there exists $\vec v \in \Z^d$ with $D + \vec v \subset E$ and $\sigma^{\vec v}(Q)|_D = P$.
In particular, if $E = \Z^d$ then $Q = x$ is a configuration, and $P \sqsubset x$ means that $P$ occurs somewhere in $x$.

A \emph{subshift} is a topologically closed and shift-invariant set $X \subset A^{\Z^d}$.
It follows from compactness that every subshift is defined by some set of \emph{forbidden patterns} $F$ as
\[
  X = X_F = \{ x \in A^{\Z^d} \;|\; \forall P \in F : P \not\sqsubset x \},
\]
If $F$ can be chosen finite, then $X$ is a \emph{shift of finite type}, or \emph{SFT} for short.

A \emph{cellular automaton} or \emph{CA} is a function $f : A^{\Z^d} \to A^{\Z^d}$ that commutes with shifts and is continuous for the prodiscrete topology of $A^{\Z^d}$.
By the Curtis-Hedlund-Lyndon theorem~\cite{He69}, every CA is defined by a finite \emph{neighborhood} $N \subset \Z^d$ and a \emph{local rule} $F : A^N \to A$ by $f(x)_{\vec v} = F(\sigma^{\vec v}(x)|_N)$. Usually we have $N = [-r, r]^d$ for some \emph{radius} $r$ and think of the local rule $F : A^N \to A$ as part of the structure of the CA. If $P \in A^D$ is a finite pattern, we can apply $f$ to $P$ by defining $E = \{ \vec v \in D \;|\; \vec v + N \subset D \}$ and $f(P) = Q \in A^E$ where $Q_{\vec v} = F(\sigma^{\vec v}(Q)|_N)$.
A \emph{Garden of Eden} for $f$ is a configuration $x \in A^{\Z^d}$ such that $f^{-1}(x) = \emptyset$; equivalently, $x \in A^{\Z^d} \setminus f(A^{\Z^d})$. The \emph{finite-support Gardens of Eden} of $f$ are
\[ \FinG(f) = \Fin(A) \setminus f(A^{\Z^d}). \]
An \emph{orphan} is a finite pattern $P$ such that $P \not\sqsubset f(x)$ for all $x \in A^{\Z^d}$.

The images of SFTs under cellular automata are \emph{sofic shifts}.
A one-dimensional subshift is sofic iff its language is regular, iff it can be defined by a regular language of forbidden words.

The \emph{Game of Life} is the two-dimensional cellular automaton $g : A^{\Z^2} \to A^{\Z^2}$ over $A = \{0,1\}$ defined by
\[ g(x)_{(a,b)} = \left\{\begin{array}{ll}
1 & \mbox{if } x_{(a, b)} = 0 \mbox{ and } \sum x|_{(a, b) + B} = 3 \\
1 & \mbox{if } x_{(a, b)} = 1 \mbox{ and } \sum x|_{(a, b) + B} \in \{3, 4\} \\
0 & \mbox{otherwise},
\end{array}\right. \]
where $B = \{-1,0,1\}^2$.
It has radius $1$.

A configuration $x \in A^{\Z^d}$ is \emph{semilinear} if for every symbol $a \in A$, the set $\{\vec v \in \Z^d \;|\; x_{\vec v} = a\}$ is \emph{semilinear}, meaning it is a finite union of \emph{linear} sets. A \emph{linear} set is a set of the form $\vec v + \langle \vec v_1, \vec v_2, ..., \vec v_k \rangle$, where $\vec v, \vec v_i \in \Z^d$, and $\langle V \rangle$ denotes the monoid generated by $V$. It is well-known that a cellular automaton image of a semilinear configuration is effectively semilinear. This is a very robust class of sets, see \cite{GiSp66} for characterizations.

For the purpose of complexity theory and computability theory, we fix a bijection between $B^*$ and $\Fin(A)$ for a fixed alphabet $B$, so that $\FinG(f)$ can be seen as a language. For $A = \{0,1,...,|A|-1\}$, $|A| \geq 2$ and $d = 2$, we fix $B = A$ and use the encoding where $w = 0^M 1^N 0 u$ with $|u| = (2M+1)(2N+1)$ represents the finite-support configuration $x \in A^{\Z^2}$ where $x_{(a, b)} = 0$ if $(a, b) \notin [-M, M] \times [-N, N]$ and $x_{(a,b)} = u_{(2M+1)a + b}$ otherwise. A similar encoding is used for finite patterns with domains of the form $[-M,M] \times [-N, N]$.

For the remainder of this article, we fix $d = 2$. Up and north are synonyms (and refer to the vector $(0,1)$), and similarly left, right and down are synonymous with west, east and south, respectively.

We need some basic facts and definitions of symbolic dynamics, automata theory and complexity theory, some standard references are \cite{LiMa95,PePi04,HoMoUl06,ArBa09}.
In particular, all Boolean operations on regular languages, the concatenation operation $KL = \{uv \;|\; u \in K, v \in L\}$, and the equality of two given regular languages, are computable.

\section{Traces}

\subsection{Periodizable traces}

A pattern $P \in A^D$ is \emph{height-$n$} if $D \subset \Z \times [0,n-1]$. We define \emph{width-$n$} similarly, and $P$ is \emph{size-$n$} if it is width-$n$ and height-$n$. For a set of finite patterns $F$, define $X^\N_F = \{x \in A^{\Z \times \N} \;|\; \forall P \in F : P \not\sqsubset x \}$ and $X^n_F = \{x \in A^{\Z \times [0,n-1]} \;|\; \forall P \in F : P \not\sqsubset x \}$. 
These are the upper half-planes and horizontal stripes of height $n$ where patterns of $F$ do not occur.

\begin{definition}
Let $F$ be a set of forbidden patterns. Write
\[ T_n(F) = \{x|_{\Z \times [0,n-1]} \;|\; x \in X^\N_F\} \subset A^{\Z \times [0,n-1]} \]
for the \emph{one-sided trace} of $X_F$. Define $\pi_n : \bigcup_{[0,n-1] \subset I \subset \Z} A^{\Z \times I} \to A^{\Z \times [0,n-1]}$ by $\pi_n(x) = x|_{\Z \times [0, n-1]}$. Define
\[ S_{n,\ell}(F) = \pi_n(\one{X}_F^{n+\ell}). \]
For $k \geq 0$ and $p \geq 1$ define
\[ P_{n,k,p}(F) = \{x|_{\Z \times [0,n-1]} \;|\; x \in X^\N_F, \forall a \in \Z, b \geq n+k: x_{(a,b)} = x_{(a, b+p)} \}. \]
\end{definition}

The trace $T_n(F)$ is the set of those height-$n$ stripes that can be extended to the entire upper half-plane without introducing a pattern of $F$, while the stripes of $S_{n,\ell}(F)$ can be extended by $\ell$ additional rows, and the stripes of $P_{n,k,p}(F)$ can be extended into upper half-planes that are vertically $p$-periodic after $k$ rows.
The inclusions $P_{n,k,p}(F) \subset T_n(F) \subset S_{n,\ell}(F)$ always hold, and $T_n(F) = \bigcap_{\ell \in \N} S_{n,\ell}(F)$.
We often identify $A^{\Z \times [0,n-1]}$ with $(A^{[0,n-1]})^\Z$, so that the traces can be seen as infinite sequences of finite patterns of shape $1 \times n$.
We could also define two-sided traces in the analogous way, but we have omitted them from this article since they are not needed for our results, computing them is more resource-intensive, and the relationships between the relevant properties of one-sided and two-sided traces is not entirely trivial.

Write $\circlearrowright$ for the operation that rotates a pattern, configuration, or every element of a set of such, 90 degrees clockwise around the origin. 

\begin{definition}
Let $F$ be a set of forbidden patterns of size $n+1$. We say $F$ has \emph{one-sided periodizable traces} if there exist $k, p \in \N$ with $p \geq 1$ such that
\begin{equation} T_n({\circlearrowright^i}(F)) \subset \one{P}_{n,k,p}({\circlearrowright^i}(F))
\label{eq:noice} \end{equation}
holds for all $i \in \{0,1,2,3\}$.
\end{definition}

For $i = 0$ this property means that if an arbitrary row of height $n$ (too thin to contain forbidden patterns) can be extended upward into a half-plane that contains no forbidden patterns, then it can also be extended by $k+p$ rows so that the last $p$ rows can be repeated periodically. For other $i$, we can interpret this as a similar property for extensions to the left, downward and right. If $X \subset A^{\Z^2}$ is an SFT where a set of forbidden patterns $F$ of height at most $n+1$ has been fixed, we sometimes say $X$ has one-sided periodizable traces if $F$ does.

\begin{lemma}
\label{lem:PeriodizableImplies}
Let $f : A^{\Z^2} \to A^{\Z^2}$ be a cellular automaton with neighborhood $[-r,r]^2$. 
Let $F$ be the patterns $P \in A^{[-r, r]^2}$ that map to a nonzero symbol in the local rule of $f$. 
If $F$ has one-sided periodizable traces, then semilinear configurations are dense in $f^{-1}(y)$ for every $y \in \Fin(A)$, and $\FinG(f)$ is in co-NP.
\end{lemma}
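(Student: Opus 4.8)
The plan is to show that one-sided periodizable traces let us surgically repair any preimage of a finite-support configuration into a semilinear one, using the periodizability on all four sides of a bounding rectangle. The key observation is that $f^{-1}(y)$ for $y \in \Fin(A)$ is the set of preimages of a configuration that is eventually $0$ in every direction, so outside a large enough rectangle $R = [-M,M]^2$ we are looking at preimages of the all-$0$ configuration, which are exactly the configurations avoiding the forbidden patterns $F$ (the patterns mapping to nonzero symbols).

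First I would fix an arbitrary $x \in f^{-1}(y)$ and an arbitrary finite window $W$ in which we wish to approximate $x$; density means producing a semilinear $x' \in f^{-1}(y)$ agreeing with $x$ on $W$. Choose $R$ large enough to contain both $W$ and the support of $y$ (padded by the radius $r$), so that the restriction of $x$ to each of the four half-planes bordering $R$ lies in $X^\N_{\circlearrowright^i(F)}$ up to rotation --- i.e.\ the height-$n$ stripe of $x$ just outside each side of $R$ is a trace element $T_n(\circlearrowright^i(F))$. By the hypothesis $T_n(\circlearrowright^i(F)) \subset P_{n,k,p}(\circlearrowright^i(F))$, each of these boundary stripes can be re-extended outward into a half-plane that is vertically (resp.\ horizontally) $p$-periodic after $k$ rows, while \emph{keeping the stripe itself fixed}. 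Since the stripe is fixed, the new half-plane extension still glues consistently with the unchanged content of $x$ inside $R$ and within the boundary band, introducing no forbidden pattern across the seam, and hence $x'$ remains a genuine preimage of $0$ outside the support of $y$ and of $y$ inside.

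The main technical care is in the \textbf{corners}: after replacing the four boundary half-planes independently, the four quadrant corners outside $R$ must be filled consistently. I would handle this by first periodizing, say, the top and bottom into horizontally-infinite but vertically-eventually-periodic strips, then treating the resulting left and right infinite stripes (now themselves eventually periodic vertically, hence describable by finitely many distinct columns) as one-dimensional configurations to which the left/right periodizability applies, so the corners inherit a doubly-periodic (hence semilinear) pattern. The resulting $x'$ is eventually $2$-dimensionally periodic in each of the four quadrants and equal to a fixed finite pattern on a central region, so each symbol's support is a finite union of translates of sub-lattices together with a finite set, which is exactly semilinear. This proves Theorem~\ref{thm:dense} (the density statement).

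For the co-NP claim, the point is that $x \in \FinG(f)$ iff $f^{-1}(\conf(P)) = \emptyset$, and by the density result just proved, if a preimage exists then a semilinear one of controlled complexity exists. A nondeterministic verifier for the \emph{complement} (``$P$ is not a Garden of Eden'') guesses the finite central data of such a semilinear preimage --- the values on a rectangle of polynomial diameter together with the finitely many period vectors and offsets, whose sizes are bounded by the constants $k,p$ from periodizability and the dimensions of $P$ --- and checks in polynomial time that applying $f$ to this finitely-described periodic configuration yields $\conf(P)$. The step I expect to be the real obstacle, and where the ``quantitative details of the proof'' alluded to in the excerpt are needed, is bounding the size of the central rectangle and verifying the $f$-image over the infinite periodic tails in polynomial time: one must argue that checking the local rule on the (finitely many distinct) periodic columns and rows and their corner interactions reduces to checking a polynomially-bounded finite pattern, so that membership in the complement is genuinely in NP and thus $\FinG(g) \in \mathrm{co\text{-}NP}$.
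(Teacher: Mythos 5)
Your overall strategy---periodize to the north and south, then to the west and east, applying $T_n({\circlearrowright^i}(F)) \subset P_{n,k,p}({\circlearrowright^i}(F))$ to height-$n = 2r$ boundary stripes while keeping each stripe itself fixed so that the seams remain locally valid---is exactly the paper's, and that part of your argument is sound. But there is a genuine gap at the corner step. The periodizability hypothesis only asserts the \emph{existence} of some extension of the west (resp.\ east) boundary stripe into a half-plane that is horizontally $p$-periodic after $k$ columns; it gives no control whatsoever over the vertical structure of that extension. So although the west boundary stripe is indeed vertically eventually periodic after the north/south steps, the repeating width-$p$ column block of the newly chosen west half-plane may be vertically aperiodic, and the width-$k$ ``channel'' between the west half-plane and the bounded-width vertically periodic strip $A_{\mathrm{north}}$ is likewise an infinite region with no periodicity guarantee. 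Your inference that ``the corners inherit a doubly-periodic (hence semilinear) pattern'' therefore does not follow: at this stage the configuration need not be semilinear at all.

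The paper closes exactly this gap with a further pigeonhole/cut-and-repeat step that your proposal is missing. Since $A_{\mathrm{north}}$ is vertically $p$-periodic and the rows of $A_{\mathrm{west}} \cup A_{\mathrm{east}}$ are horizontally $p$-periodic, every full-width height-$n$ horizontal stripe above height $N+r+k$ is determined by the phase of the vertical period together with the two $n \times (k+p)$ patterns flanking $A_{\mathrm{north}}$ (the aperiodic channels plus one horizontal period of each half-plane), so there are at most $q = p\,|A|^{2n(k+p)}$ distinct such stripes. By pigeonhole two stripes within a window of height $q+1$ coincide, and (using $p \geq n$, with stripe height $n = 2r$ exceeding the forbidden-pattern height minus one) one repeats the block between them, making the entire upper half-plane vertically $p'$-periodic with $p' \leq q$; symmetrically below, with $q!$ as a common vertical period. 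This also corrects your co-NP accounting: the period data in the certificate is bounded by the constant $q$ (indeed $q!$), not by $k$ and $p$ directly as you claim---still a constant for a fixed CA, so the complexity conclusion survives, but only once this extra periodization step is supplied.
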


\begin{proof}
Denote $n = 2 r$, and let  $k, p \in \N$ be given by the assumption of one-sided periodizable traces. We may assume $p \geq n$. Let $y \in \Fin(A)$ be arbitary. We prove that semilinear configurations are dense in $f^{-1}(y)$. If $y$ has no preimage, we are done; otherwise, take an arbitrary preimage $f(x) = y$, and let $N > p$ be such that the support of $y$ is contained in $D = [-N,N]^2$. We construct a semilinear preimage of $y$ that agrees with $x$ inside $R = [-N-r, N+r]^2$.

Consider the upper half-plane containing the top $r$ rows of $D$ and everything above them.
We have $\sigma^{(0, N-r+1)}(x)|_{\Z \times [0, \infty)} = z \in X^\N_F$.
Then the restricton to the bottom $2r$ rows of this half-plane satisfies $\sigma^{(0, N-r+1)}(x)|_{\Z \times [0, 2r-1]} = \pi_{2r}(z) \in T_{2r}(F) = P_{2r,k,p}(F)$, where the last equality is given by \eqref{eq:noice} with $i = 0$.
This means we can replace the contents of an upper half-plane in $x$ with a vertically periodic pattern, i.e.\ there exists $x^1 \in f^{-1}(y)$ such that $x^1|_{\Z \times (-\infty, N+r]} = x|_{\Z \times (-\infty, N+r]}$ and $x^1_{(a, b)} = x^1_{(a, b+p)}$ for all $a \in \Z$ and $b > N+r+k$.
We can apply the exact same argument to the configuration $x^1$ below the rectangle $[-N, N]^2$, using \eqref{eq:noice} for $i = 2$, giving us $x^2 \in f^{-1}(y)$ such that
$x^2|_{\Z \times [-N-r, N+r]} = x|_{\Z \times [-N-r, N+r]}$ and $x^2_{(a, b)} = x^2_{(a, b+p)}$ whenever $a \in \Z$ and $\min(|b| , |b+p|) > N+r+k$.

Next, we apply the same argument on the left and right borders of $D$ in $x^2$, using \eqref{eq:noice} with $i = 1, 3$ in either order. This gives us a preimage $x^3 \in f^{-1}(y)$ such that $x^3|_{[-N-r, N+r]^2} = x|_{[-N-r, N+r]^2}$ and, defining $A_{\mathrm{north}} = [-N-r, N+r] \times [N+r+k+1, \infty)$, $A_{\mathrm{south}} = [-N-r, N+r] \times (-\infty, -N-r-k-1]$, $A_{\mathrm{west}} = (-\infty, -N-r-k-1] \times \Z$, $A_{\mathrm{east}} = [N+r+k+1, \infty) \times \Z$, we have $x^3_{(a, b)} = x^3_{(a, b+p)}$ whenever $(a, b), (a,b+p) \in A_{\mathrm{north}} \cup A_{\mathrm{south}}$, and
$x^3_{(a, b)} = x^3_{(a+p, b)}$ whenever $(a, b), (a+p, b) \in A_{\mathrm{west}} \cup A_{\mathrm{east}}$.

The ``aperiodic region'' $B = \Z^2 \setminus (A_{\mathrm{north}} \cup A_{\mathrm{south}} \cup  A_{\mathrm{west}} \cup A_{\mathrm{east}})$ that is left after this process is infinite, so we need a final periodization step to obtain semilinearity. Since $A_{\mathrm{north}}$ is $(0,p)$-periodic, and rows of $A_{\mathrm{west}}$ and $A_{\mathrm{east}}$ are $(p,0)$-periodic, the number of distinct height-$n$ stripes $\sigma^{(0,h)}(x)|_{\Z \times [0, n-1]}$ for $h > N+r+k$ is bounded by $q = p |A|^{2 n (k + p)}$, where the factor $p$ comes from the phase of the period in $A_{\mathrm{north}}$ and the term $|A|^{2 n (k + p)}$ comes from the two patterns of shape $n \times (k+p)$ just to the the west and east sides of $A_{\mathrm{north}}$, which include the width-$k$ aperiodic regions and the repeating parts of $A_{\mathrm{west}}$ and $A_{\mathrm{east}}$.

Thus the stripes defined by some $N+r+k < h_1 < h_2 \leq N+r+k + q + 1$ are equal, and since we assumed $p \geq 2r$, we can form a new configuration $x^4 \in f^{-1}(y)$ by repeating the part between there stripes in the upper half-plane $[h_1, \infty)$.
Then $x^4$ agrees with $x^3$ on $\Z \times (-\infty,N+r]$ and is vertically $p'$-periodic in $[h_1, \infty)$ for some $p' \leq q$.
We apply the same argument to the south half-plane of $x^4$, obtaining a configuration $x^5 \in f^{-1}(y)$ with $x|_{[-N-r, N+r]^2} = x^5|_{[-N-r, N+r]^2}$ that has horizontal period $p$ outside $[-N-r-k, N+r+k] \times \Z$ and vertical period $q!$ outside $\Z \times [-q, q]$.
In particular, $x^5$ is semilinear.

The co-NP claim follows from the quantitative statements above about the semilinear preimages. A semilinear preimage with the periodicity properties of $x^5$ can be summarized as a polynomial-size certificate, by giving the restriction
\[ x^5|_{[-N-r-k-p, N+r+k+p] \times [-N-q-q!, N+q+q!]}. \]
One can check in polynomial time that continuing the periods does not give a forbidden pattern for $X$.
Note that while $q!$ grows very fast as a function of $|A|$, $p$ and $k$, it is a constant for any fixed CA that satisfies the assumptions.
This proves that the complement of $\FinG(f)$ is in NP, as claimed.
\end{proof}

We include the statement that semilinear preimages imply co-NP, because this gives a very natural certificate -- an actual preimage. However, the details of working computationally with semilinear configurations, while standard, are not entirely trivial, and are omitted in the above proof. For the co-NP certificate obtained from Lemma~\ref{lem:StableImplies} the verification algorithm is much more obvious.

\subsection{Stable traces}

There is no general method of computing the one-sided traces $T_n(F)$ exactly.
On the other hand, the languages $\lang{S_{n,\ell}(F)}$ of the approximate traces are regular and easy (though often resource-intensive) to compute.
If the sequence $(S_{n,\ell}(F))_{\ell \in \N}$ stabilizes after finitely many steps, then $\lang{T_n(F)}$ is also a regular language and we can analyze it using finite automata theory.

\begin{definition}
If $S_{n,\ell}({\circlearrowright^i}(F))) = S_{n,\ell+1}({\circlearrowright^i}(F))$ for some $\ell \in \N$ and all $i \in \{0,1,2,3\}$ and $F$ has size at most $n+1$, then we say $F$ has \emph{one-sided stable traces}.
\end{definition}

%
%

The following is shown by induction, by extending a configuration legally, row by row, obtaining a valid half-plane in the limit.

\begin{lemma}
\label{lem:StableImpliesActualTrace}
If $F$ has height at most $n+1$ and $S_{n,\ell}(F)) = S_{n,\ell+1}(F)$, then $S_{n,\ell}(F) = S_{n,\ell+m}(F) = T_n(F)$ for all $m \geq 0$.
\end{lemma}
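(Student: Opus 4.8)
The plan is to exploit that the patterns in $F$ have height at most $n+1$, so that the legality condition defining the approximate traces is ``Markov'' in the vertical direction with window $n+1$. This lets me express $S_{n,\ell+1}(F)$ as the image of $S_{n,\ell}(F)$ under a fixed monotone operator, reducing the whole statement to a fixed-point argument together with a constructive half-plane extension.

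First I would record two easy facts. The sequence $(S_{n,\ell}(F))_\ell$ is decreasing: a height-$n$ stripe that extends upward by $\ell+1$ rows avoiding $F$ also extends by $\ell$ rows (forget the top row), so $S_{n,\ell+1}(F) \subseteq S_{n,\ell}(F)$. And $T_n(F) \subseteq S_{n,\ell}(F)$ for every $\ell$, since the bottom $n+\ell$ rows of any half-plane in $X^\N_F$ witness membership. Combined with the identity $T_n(F) = \bigcap_\ell S_{n,\ell}(F)$ from the excerpt, it then suffices to prove $S_{n,\ell}(F) \subseteq T_n(F)$ under the stability hypothesis; this inclusion will simultaneously yield $S_{n,\ell+m}(F) = S_{n,\ell}(F)$ for all $m$.

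The key step is the recursion. For a set $T$ of height-$n$ stripes I define $\Phi(T) = \{ b : \exists c \in A^\Z,\ (b,c) \in X^{n+1}_F \text{ and } \sigma^{(0,1)}((b,c))|_{\Z\times[0,n-1]} \in T \}$, where $(b,c)$ is the height-$(n+1)$ stripe with $b$ on rows $0,\dots,n-1$ and $c$ on row $n$. I claim $S_{n,\ell+1}(F) = \Phi(S_{n,\ell}(F))$. Indeed, a height-$(n+\ell+1)$ stripe avoids $F$ iff each of its height-$(n+1)$ windows (rows $j,\dots,j+n$ for $0 \le j \le \ell$) does, because every occurrence of a pattern of $F$ fits into some such window. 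The window $j=0$ is exactly the condition $(b,c) \in X^{n+1}_F$, while the windows with $j \ge 1$ all lie in rows $1,\dots,n+\ell$ and are captured by requiring the shifted stripe on rows $1,\dots,n$ to extend upward by $\ell$ rows, i.e. to lie in $S_{n,\ell}(F)$. The operator $\Phi$ is visibly monotone. The hypothesis $S_{n,\ell}(F) = S_{n,\ell+1}(F) = \Phi(S_{n,\ell}(F))$ then says $S_{n,\ell}(F)$ is a fixed point of $\Phi$, and applying $\Phi$ and the recursion inductively gives $S_{n,\ell+m}(F) = S_{n,\ell}(F)$ for all $m \ge 0$. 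To get $S_{n,\ell}(F) \subseteq T_n(F)$, I start from any $b_0 \in S_{n,\ell}(F)$ and extend row by row: since $b_0 \in \Phi(S_{n,\ell}(F))$, there is a top row producing a legal height-$(n+1)$ block whose upper $n$ rows $b_1$ again lie in $S_{n,\ell}(F)$; iterating forever yields a configuration on $\Z \times \N$ all of whose height-$(n+1)$ windows avoid $F$, hence an element of $X^\N_F$ with bottom stripe $b_0$, so $b_0 \in T_n(F)$. This is the ``valid half-plane in the limit'' construction, and the fixed-point property is precisely what guarantees the extension never gets stuck.

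The main obstacle is making the recursion $S_{n,\ell+1}(F) = \Phi(S_{n,\ell}(F))$ airtight: one must verify that the height bound on $F$ really forces every occurrence of a forbidden pattern into one of the height-$(n+1)$ windows, and that the two conditions in $\Phi$ (legality of the bottom block and extendability of the shifted stripe) jointly account for all of them, with no gaps. Once that is checked, everything else is a routine monotone fixed-point iteration together with the constructive half-plane extension.
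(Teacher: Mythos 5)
Your proof is correct and is essentially the paper's argument: the paper proves this lemma only by the one-line remark that one extends a configuration legally row by row, obtaining a valid half-plane in the limit, and your monotone operator $\Phi$ with the fixed-point iteration is exactly a careful formalization of that row-by-row extension (with the height-$(n+1)$ window decomposition justifying each step). No gaps; you have simply written out the details the paper leaves implicit.
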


We recall a basic symbolic dynamics lemma, a version of the pumping lemma.

\begin{lemma}
\label{lem:Extensions}
Let $L \subset A^*$ be a regular factor-closed language and $X \subset A^\Z$ the largest subshift with $\lang{X} \subset L$. Then there exists $C \geq 0$ such that $u w v \in L$ and $|u|, |v| \geq C$ implies $w \in \lang{X}$.
\end{lemma}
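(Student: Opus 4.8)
The plan is to make the subshift $X$ explicit and then reduce the statement to a single application of the pumping lemma for regular languages, pumping independent loops on the left and right of $w$ simultaneously. Since $L$ is factor-closed, the largest subshift with language contained in $L$ is exactly $X = \{x \in A^\Z \mid \text{every factor of } x \text{ lies in } L\}$: its language is contained in $L$ because every factor of a point of $X$ is in $L$, and any subshift $Y$ with $\lang{Y} \subset L$ satisfies $Y \subseteq X$ since all factors of its points lie in $\lang{Y} \subset L$. The key reformulation is that, by compactness (K\"onig's lemma), $w \in \lang{X}$ if and only if $w$ admits arbitrarily long extensions on both sides inside $L$, i.e.\ for every $n$ there are $\alpha, \beta \in A^*$ with $|\alpha|, |\beta| \geq n$ and $\alpha w \beta \in L$; here factor-closedness guarantees that every factor of such an $\alpha w \beta$, and hence every factor of the limiting bi-infinite point, lies in $L$.

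With this reformulation in hand, I would fix a finite automaton $M$ recognizing $L$, say with $m$ states, and set $C = m$. Given any $u w v \in L$ with $|u|, |v| \geq m$, I run $M$ on $u w v$ and record the state sequence. While reading $u$ the automaton visits at least $|u| + 1 \geq m + 1$ states, so by the pigeonhole principle some state repeats; this isolates a nonempty loop $u_2$, giving a factorization $u = u_1 u_2 u_3$ that can be pumped. The identical argument applied to the portion of the run that reads $v$ yields a nonempty loop $v_2$ and a factorization $v = v_1 v_2 v_3$. Because these two loops occur in disjoint portions of the accepting run, they can be pumped independently, so $u_1 u_2^k u_3\, w\, v_1 v_2^j v_3 \in L$ for all $k, j \geq 0$.

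Finally I would let $k, j \to \infty$: since $u_2$ and $v_2$ are nonempty, the left factor $u_1 u_2^k u_3$ and the right factor $v_1 v_2^j v_3$ grow without bound, so $w$ has arbitrarily long two-sided extensions inside $L$. By the reformulation above this gives $w \in \lang{X}$, which completes the proof with the constant $C = m$.

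I expect the only real subtlety to be the compactness step that identifies $\lang{X}$ with two-sided extendability inside $L$; the pumping part is routine once one observes that the two loops live in disjoint regions and therefore pump independently. One should also double-check the bookkeeping that $|u|, |v| \geq m$ (rather than strictly greater) already forces a repeated state, so that $C = m$ is a valid choice of constant.
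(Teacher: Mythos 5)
Your proof is correct, and its skeleton matches the paper's: both arguments run an automaton on $uwv$, apply the pigeonhole principle inside the portions of the run reading $u$ and $v$ to extract nonempty loops $u_2$ and $v_2$, and pump. The difference lies in the finishing step. The paper exploits factor-closedness \emph{at the automaton level}: it takes an NFA for $L$ in which every state is initial and final (possible precisely because $L$ is factor-closed, using a trimmed automaton), which lets it discard the pieces $u_1$ and $v_3$ entirely and conclude $u_2^k u_3 w v_1 v_2^k \in L$ for all $k$; from this it directly exhibits the explicit eventually periodic point ${}^\infty u_2 u_3 w v_1 v_2^\infty \in X$ witnessing $w \in \lang{X}$, with no compactness argument needed beyond checking that every factor of this configuration lies in $L$. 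You instead keep all six pieces, pump the two loops independently (legitimate, since they occupy disjoint portions of a single fixed accepting run), and pass from ``arbitrarily long two-sided extensions of $w$ inside $L$'' to $w \in \lang{X}$ via K\"onig's lemma; your sketch of that compactness step (central factors of a limit point are factors of some $\alpha_n w \beta_n$, hence in $L$ by factor-closedness) is the right argument and is where, as you note, the real content sits. What each approach buys: yours works with an arbitrary automaton recognizing $L$ and gives the marginally better constant $C = m$ (the paper's counting would also allow $C = n$, though it states $C = n+1$), while the paper's choice of automaton makes the proof fully constructive, producing an eventually periodic preimage point rather than a mere limit --- a flavor of explicitness that is consistent with how the paper uses periodic continuations elsewhere. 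Your bookkeeping concern is fine: reading $u$ with $|u| \geq m$ visits $|u|+1 \geq m+1$ states, so a state repeats and $C = m$ suffices.
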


\begin{proof}
It is easy to see that $X = \{x \in A^\Z \;|\; \forall k: x|_{[-k, k]} \in L\}$. Take a nondeterministic finite-state automaton (NFA) for $L$ with $n$ states all of whose states are initial and final, and pick $C = n+1$. Suppose $uwv \in L$ and $|u|, |v| \geq C$, and pick any accepting path $P$ for $u w v$ in the automaton. By the pigeonhole principle, some state repeats in the length-$C$ prefix of $P$ corresponding to $u$, and the same is true for the suffix corresponding to $v$, so we obtain decompositions $u = u_1 u_2 u_3, v = v_1 v_2 v_3$ with $|u_2|, |v_2| > 0$ such that $u_2^k u_3 w v_1 v_2^k \in L$ for all $k \in \N$. Then ${}^\infty u_2 u_3 w v_1 v_2^\infty \in X$, so that $w \in \lang{X}$.
\end{proof}

By the proof, we can always pick $C = n+1$ where $n$ is the number of states in any NFA accepting $L$ with all states initial and final. Often we can do better.

\begin{lemma}
\label{lem:StableImplies}
Let $f : A^{\Z^2} \to A^{\Z^2}$ be a cellular automaton with neighborhood $[-r,r]^2$. 
Let $F$ be the patterns $P \in A^{[-r, r]^2}$ that map to a nonzero symbol in the local rule of $f$. If $F$ has one-sided stable traces, then there exists $c \in \N$ such that
whenever $P \in A^{[0,M) \times [0,N)}$ and $\pad^c(P)$ admits a preimage $Q$, then $\conf(P)$ admits a preimage $x \in A^{\Z^2}$ with
\[ x|_{[-r,M+r) \times [-r,N+r)} = Q|_{[-r,M+r) \times [-r,N+r)}. \]
\end{lemma}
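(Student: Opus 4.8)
The plan is to mimic the construction in the proof of Lemma~\ref{lem:PeriodizableImplies}, but to replace the periodization steps, which are unavailable under the weaker hypothesis of stable traces, by trace elements extracted from the zero-padding of $P$. Throughout set $n = 2r$, let $\ell$ be a common stabilization index so that $S_{n,\ell}(\circlearrowright^i F) = T_n(\circlearrowright^i F)$ for all $i \in \{0,1,2,3\}$ by Lemma~\ref{lem:StableImpliesActualTrace}, and let $C$ be the pumping constant of Lemma~\ref{lem:Extensions} for the four regular trace languages $\lang{T_n(\circlearrowright^i F)}$. I would then take $c = r + \ell + C + n$, say, large enough to provide both vertical room to certify trace membership and horizontal room to pump.

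First I would unwind the hypothesis: since $F$ consists of the radius-$r$ patterns producing a nonzero symbol, we have $X_F = f^{-1}(0^{\Z^2})$, and a preimage $Q$ of $\pad^c(P)$ satisfies $f(Q) = 0$ on the whole padding annulus. Hence $Q$ is free of patterns of $F$ on a block of height about $c$ directly above (respectively below, east, and west of) the domain $D = [0,M)\times[0,N)$ of $P$, and this block is wide enough to contain a margin of $C$ columns on either side of $D$.

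The core step is to turn each such one-sided zero block into a genuine bi-infinite trace element. For the north side I would take the height-$n$ strip $S$ of $Q$ lying just above the kept region $[-r,M+r)\times[-r,N+r)$. Because $f(Q)=0$ on the $\ell$ rows above $S$ throughout the padding, $S$ together with those rows forms a wide $F$-free block, and the largeness of $c$ lets me view a central window $w$ of $S$ as the middle part $uwv$ of a word of $\lang{S_{n,\ell}(F)} = \lang{T_n(F)}$ with $|u|,|v| \geq C$. Applying Lemma~\ref{lem:Extensions} then yields $w \in \lang{T_n(F)}$, so $w$ extends to a bi-infinite element $\hat S \in T_n(F)$ that agrees with $Q$ over the central columns and, by the pumping construction, has eventually periodic tails to the east and west. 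By definition of the trace, $\hat S$ extends to a full upper half-plane mapping to $0$, which I glue onto $Q$ above the kept region; the overlap strip equals $\hat S$, so no pattern of $F$ is created at the seam. The same argument applied to $\circlearrowright^i F$ produces the south, east, and west extensions.

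The main obstacle, and the place where the rectangular shape and the precise thickness enter, is to assemble these four one-sided extensions into a single configuration with no pattern of $F$ anywhere outside $D$. I would extend north and south to full-width half-planes first; then the two quarter-plane ``corners'' on each side are already absorbed into these half-planes, and the east and west extensions need only fill the remaining central horizontal strip, where they are glued to the kept copy of $Q$. The delicate point is the consistency at the top and bottom of the east/west extensions, where they abut the north/south half-planes far from $D$: there one must reconcile the horizontally periodic tails coming from the north/south pumping with the vertically periodic tails coming from the east/west pumping. I expect this matching to be the technical heart of the proof, and to be exactly what forces the padding to be a sufficiently thick rectangle; it should be handled by choosing $c$ large enough that the periodic tails guaranteed by Lemma~\ref{lem:Extensions} overlap in a region where they can be made to coincide. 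Once this is arranged, the resulting configuration $x$ agrees with $Q$ on $[-r,M+r)\times[-r,N+r)$, maps to $P$ on $D$ and to $0$ elsewhere, and is therefore the desired preimage of $\conf(P)$.
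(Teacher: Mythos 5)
Your setup---the stabilization index $\ell$ with $S_{n,\ell}(\circlearrowright^i F) = T_n(\circlearrowright^i F)$ via Lemma~\ref{lem:StableImpliesActualTrace}, the pumping constant $C$ from Lemma~\ref{lem:Extensions} applied to the regular language of height-$n$ blocks extendable upward by $\ell$ rows, and the extraction of the north/south strips from $Q$---matches the paper's proof. But your assembly step contains a genuine gap, which you correctly flag but do not close: you keep the north and south half-planes fixed and try to fill only the remaining central horizontal strip with the east/west extensions, which forces you to reconcile, at the four corners, the horizontally periodic tails produced by the north/south pumping with the vertically periodic tails produced by the east/west pumping. No choice of $c$ can repair this: $c$ controls the padding of $P$, whereas the tails come from the existential pumping argument of Lemma~\ref{lem:Extensions}, which gives no control over their contents---the two families of tails are unrelated words in transverse directions, and nothing guarantees they can ``be made to coincide'' on any overlap. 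As proposed, the corner step would fail; it is not merely a technicality to be absorbed into a larger $c$.

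The paper avoids corners altogether by ordering the replacements and re-extracting the east/west data from the \emph{modified} configuration rather than from $Q$. After the north and south half-planes of a preimage $x$ have been replaced (yielding $x^2$), the width-$2r$ vertical strip straddling the west border of $D$ is a \emph{bi-infinite} word, and its $\ell$-column westward extension is witnessed inside $x^2$ itself: in the middle rows it is $F$-free because of the horizontal padding of $Q$, and in the outer rows because the freshly built half-planes contain no pattern of $F$ anywhere. Hence this strip lies in $S_{n,\ell}(\circlearrowright F) = T_n(\circlearrowright F)$ directly---no pumping lemma is needed on these sides, precisely because the word is already bi-infinite---and one replaces the \emph{entire} west half-plane, overwriting the corner regions of the north/south extensions; the seam is safe because a pattern of size $n+1$ cannot straddle a kept strip of width $n = 2r$. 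Doing the same on the east yields the preimage and the constant $c = C + \ell$. So the missing idea is: perform the four trace extensions sequentially on whole half-planes, exploiting that stability is assumed for all four rotations of $F$, instead of gluing four independent one-sided extensions around the fixed rectangle and hoping to match them at the corners.
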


We use the same padding $c$ on all sides for notational convenience only. For the Game of Life this does not change anything, but for less symmetric CA and CA where the constant $C$ in the proof is not equal to $0$, one may optimize this by using different paddings on all sides.


\begin{proof}
Denote $n = 2r$ and let $\ell \in \N$ be such that $S_{n,\ell}(F) = S_{n,\ell+1}(F)$, so that $S_{n,\ell}(F) = \one{T}_n(F)$ by Lemma~\ref{lem:StableImpliesActualTrace}.
Consider the set
\[
  L_{n,\ell}(F) = \{ Q|_{[0,k) \times [0,n)} \;|\; Q \in A^{[0,k) \times [0,n+\ell)}, \forall P \in F : P \not\sqsubset Q \}
\]
of height-$n$ rectangular patterns that can be extended upward by $\ell$ steps into a pattern not containing any pattern from $F$ as a subpattern.
We consider it as a language over the alphabet $A^n$.
Since it is regular and factor-closed, and the largest subshift whose language is contained in $L_{n,\ell}(F)$ is clearly $S_{n,\ell}(F)$, by the previous lemma there exists $C \in \N$ such that if a word of $L_{n,\ell}(F)$ is extendable by $C$ steps in both directions, it occurs in $S_{n,\ell}(F) = T_n(F)$.

Let $a = C + \ell$ and $b = \ell$, and suppose a pattern $P \in A^{[-a, M+a) \times [-b, N+b)}$ has its support contained in $D = [0,M) \times [0, N)$ and admits a preimage. Let $x$ be a configuration with $f(x)|_{[-a, M+a) \times [-b, N+b)} = P$. Then the $r$ top rows of $D$ and the $r$ rows above them satisfy $\sigma^{(0,N-r-1)}(x)|_{[-a-r, M+a+r) \times [0,2r)} \in L_{n,\ell}(F)$, therefore $\sigma^{(0, N-r-1)}(x)|_{[-r-\ell, M+r+\ell) \times [0,2r)} \in \lang{S_{n,\ell}(F)} = \lang{T_n(F)}$ by the assumption on $C$. Hence we can extend this pattern into a stripe of $T_n(F)$ and then extend the upper half-plane into one that maps to $0$s, obtaining a configuration $x^1 \in A^{\Z^2}$ with $x^1|_{[-a,M+a)} = x|_{[-b,N+r)}$ and $x^1|_{\Z \times (N-r, \infty)}$ not containing occurrences of any pattern in $F$.

Perform the same operation symmetrically on the south border of $D$ in $x^1$ to obtain a configuration $x^2$. 
Then the support of $f(x^2)$ is contained in $([0, M) \times [0, N)) \cup (R \times [0, N))$ where $R = (-\infty, -\ell] \cup [M+\ell, \infty)$.
We can now apply the exact same argument on the west and east borders of $D$ in $x^2$ (in either order) to obtain a configuration $x^3$ such that the support of $f(x^3)$ is contained in $[0, M) \times [0, N)$ and $f(x^3)|_{[0,M) \times [0,N)} = f(x)|_{[0,M) \times [0,N)}$. Picking $c = \max(a, b) = C + \ell$, this argument shows the first claim.

For the claim that $\FinG(f)$ many-one reduces to orphans in polynomial time, given an element $y$ of $\Fin(A)$ with support contained in $[0, M) \times [0, N)$, we simply note that the pattern $y|_{[-c, M+c) \times [-c, N+c)}$ is an orphan if and only if $y \in \FinG(f)$.
\end{proof}

Since orphans (of any finite shape) are in co-NP for any cellular automaton, the previous lemma gives another verification algorithm for showing that $\FinG(f)$ is co-NP. It is easier to implement as that given by Lemma~\ref{lem:PeriodizableImplies} when the conditions of both results apply. However, the certificate is less useful: unlike in Lemma~\ref{lem:PeriodizableImplies}, the certificate does not describe an actual preimage of the finite-support configuration. Together, Lemma~\ref{lem:PeriodizableImplies} and Lemma~\ref{lem:StableImplies} imply (when the assumptions hold) that whenever a pattern extends to a non-orphan with a large enough zero-padding, the corresponding finite-support configuration admits a semilinear preimage. This will be illustrated in the next section. 

\section{Application to the Game of Life}

\begin{theorem}
\label{thm:GoLPeriodizable}
The preimage SFT of $0^{\Z^2}$ in the Game of Life has stable and periodizable one-sided traces.
\end{theorem}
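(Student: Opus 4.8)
The plan is to collapse the four conditions (for $i \in \{0,1,2,3\}$) in the definitions of stable and periodizable one-sided traces into a single condition, and then to verify that condition by an explicit automata-theoretic computation. The key preliminary observation is that the local rule of the Game of Life depends only on the state of the center cell and the number of live cells among its eight neighbors, so it is invariant under the full symmetry group $D_4$ of the square. Consequently the set $F$ of forbidden $3 \times 3$ patterns (those whose center maps to $1$, namely center $0$ with neighbor-sum $3$, or center $1$ with neighbor-sum $2$ or $3$) satisfies $\circlearrowright(F) = F$, so it suffices to establish stability and periodizability for $i = 0$. Here $r = 1$, hence $n = 2r = 2$ and $F$ has size $3 = n+1$, exactly as the definitions require.

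For stability, I would compute the approximate traces $S_{2,\ell}(F)$ for $\ell = 0, 1, 2, \dots$ as regular languages over the column alphabet $A^{[0,1]}$ of four symbols. The height-$(2+\ell)$ strips avoiding $F$ form a one-dimensional SFT over the alphabet $A^{2+\ell}$ of height-$(2+\ell)$ columns, whose forbidden words are the length-$3$ column-triples that contain some pattern of $F$; projecting each column onto its bottom two entries turns this SFT into the sofic shift $S_{2,\ell}(F)$, with a computable regular language. I would then increase $\ell$ until $S_{2,\ell}(F) = S_{2,\ell+1}(F)$, which is a decidable equality of regular languages. By Lemma~\ref{lem:StableImpliesActualTrace} this gives $T_2(F) = S_{2,\ell}(F)$, establishing one-sided stable traces.

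For periodizability, I would fix the candidate period $p = 3$ together with an offset $k$ (the introduction indicates the periodic regime begins within distance $3$ of the stripe) and compute $P_{2,k,p}(F)$ as a regular language. A half-plane avoiding $F$ that is vertically $p$-periodic above row $2+k$ is determined by its bottom $2+k+p$ rows, subject to the periodicity constraint and to finitely many wrap-around checks guaranteeing that no forbidden pattern straddles the period seam; these periodic extensions form a sofic shift whose bottom-two-row projection is $P_{2,k,p}(F)$. Since $P_{2,k,p}(F) \subset T_2(F)$ holds unconditionally, it remains only to verify the reverse inclusion $T_2(F) = S_{2,\ell}(F) \subset P_{2,k,p}(F)$ as an inclusion of regular languages, which is again decidable. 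This yields $T_2(F) \subset P_{2,k,p}(F)$, i.e.\ periodizability, and by the $D_4$ symmetry the single computation for $i = 0$ covers all $i \in \{0,1,2,3\}$.

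The main obstacle is computational rather than conceptual: the column alphabet $A^{2+\ell}$ and the periodic-extension state space both grow exponentially, so determinizing, minimizing, and comparing the resulting automata is resource-intensive, and the precise constants $\ell$, $k$, $p$ at which stabilization and periodizability first hold can only be pinned down by the actual run. The delicate point in the construction is arranging the periodizability automaton so that the finitely many forbidden-pattern checks across the period seam correctly certify that the infinite periodic continuation avoids $F$; once this is in place, both properties are witnessed by finite automata and can be verified by computer, as in the attached program.
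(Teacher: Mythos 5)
Your proposal is correct and takes essentially the same route as the paper: the $D_4$-symmetry reduction $\circlearrowright(F) = F$ collapsing the four conditions to $i = 0$, followed by machine verification over regular languages of columns, where the paper's run pins down the constants $\ell = 4$, $k = 3$, $p = 3$. The only cosmetic difference is that the paper verifies the single inclusion $S_{2,\ell}(F) \subset P_{2,k,p}(F)$, which via the chain $P_{2,k,p}(F) \subset T_2(F) \subset S_{2,\ell+1}(F) \subset S_{2,\ell}(F)$ already forces the stabilization equality $S_{2,\ell}(F) = S_{2,\ell+1}(F)$ that you check as a separate step.
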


\begin{proof}
Let $g : \{0,1\}^{\Z^2} \to \{0,1\}^{\Z^2}$ be the Game of Life. Let $F$ be the natural forbidden patterns for $g^{-1}(0^{\Z^2})$. Since $g$ has radius $1$, we pick $n = 2$. Clearly $F = {\circlearrowright^i}(F)$ for all $i$, so it is enough to find $k, \ell, p$ such that $\one{S}_{n, \ell}(F) \subset \one{P}_{n,k,p}(F)$. One can show that the choice $\ell = 4, k = 3, p = 3$ works, by computer.
\end{proof}

Our program in Appendix~\ref{sec:Program} verifies this result.

By using the minimal automata for the languages of the approximate traces, one can check\footnote{Once the word $w$ is given, one can prove by hand straight form the definitions that it separates the regular languages, as a constraint-solving puzzle.} that
\[ w = \begin{smallmatrix}
1 & 1 & 0 & 0 & 1 & 0 & 0 & 0 & 0 & 0 & 0 & 0 & 0 & 0 & 1 & 0 & 0 & 0 & 1 & 0 & 1 & 0 & 1 & 0 & 1 & 0 & 0 & 0 & 1 & 0 \\
0 & 0 & 1 & 0 & 0 & 0 & 0 & 1 & 1 & 1 & 0 & 0 & 1 & 0 & 1 & 0 & 1 & 0 & 1 & 0 & 0 & 0 & 1 & 0 & 0 & 0 & 1 & 0 & 0 & 0
\end{smallmatrix} \in \lang{S_{2, 3}(F)} \setminus \lang{S_{2, 4}(F)}, \]
i.e. $w$ can be extended infinitely on both sides so that the resulting configuration can be continued by three rows upward without introducing a $1$ in the Game of Life image, but not by four rows. Since these subshifts are not equal, $\ell = 4$ is the minimal value we can use. One can similarly show that $k = 3, p = 3$ are also optimal; $p = 3$ is optimal by using the word from the proof of Proposition~\ref{prop:AsympPeriodicNotDense} below. The optimality of $k = 3$ can be shown similarly as that of $\ell$ by studying the minimal automata.

\begin{lemma}
\label{lem:GoLConstant}
In the situation of the Game of Life, we have $L_{2, 4}(F) = \lang{S_{2, 4}(F)}$, i.e.\ in Lemma~\ref{lem:Extensions} applied to $L_{2, 4}(F)$ one can pick $C = 0$.
\end{lemma}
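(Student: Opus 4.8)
The plan is to establish the two inclusions separately. The inclusion $\lang{S_{2,4}(F)} \subseteq L_{2,4}(F)$ is immediate: a word $w$ over the column alphabet $A^2$ occurring in some $x = \pi_2(z)$ with $z \in X^6_F$ bi-infinite can be completed, using the four rows of $z$ directly above the occurrence, to a finite-width height-$6$ pattern in which no forbidden $3 \times 3$ pattern occurs, so $w \in L_{2,4}(F)$; this is also automatic once one recalls that $S_{2,4}(F)$ is the largest subshift with language inside $L_{2,4}(F)$. The real content is the reverse inclusion $L_{2,4}(F) \subseteq \lang{S_{2,4}(F)}$. Since $L_{2,4}(F)$ is regular and factor-closed, and, as noted in the proof of Lemma~\ref{lem:StableImplies}, the largest subshift whose language is contained in $L_{2,4}(F)$ is exactly $S_{2,4}(F)$, this reverse inclusion is \emph{precisely} the assertion that the constant $C$ of Lemma~\ref{lem:Extensions} may be taken to be $0$ for $L = L_{2,4}(F)$.

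I would then verify the reverse inclusion by the same automata-theoretic computation used for Theorem~\ref{thm:GoLPeriodizable}. Reading a height-$6$ strip column by column, one builds a nondeterministic automaton over $A^2$ that guesses the top four rows of each column while remembering the previous two height-$6$ columns, and accepts iff every fully determined $3 \times 3$ window maps to a dead cell under $g$; its language is $L_{2,4}(F)$. From the minimized automaton one extracts $\lang{S_{2,4}(F)}$ by trimming to the states lying on a bi-infinite path, namely those reachable from arbitrarily far in the past and extendable arbitrarily far into the future, and then checks equality of the two regular languages. As with the stability computation, this is a finite and decidable check, carried out by the program in Appendix~\ref{sec:Program}; in contrast to the word $w$ exhibited above separating $\lang{S_{2,3}(F)}$ from $\lang{S_{2,4}(F)}$, here the two minimal automata coincide, so no separating word exists.

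The main obstacle is the horizontal seam, which is exactly what makes $C = 0$ nontrivial rather than a triviality. Extending a valid finite height-$6$ strip to a bi-infinite one by, say, appending zero-bottom columns can create a boundary $3 \times 3$ window whose image is live, so a priori one expects to need a nonzero buffer to absorb such boundary effects, as in the general bound $C = n+1$ of Lemma~\ref{lem:Extensions}. What must be shown is that for the Game of Life no buffer is needed: every finite strip certifying $w \in L_{2,4}(F)$ already embeds, without altering the columns of $w$, into a bi-infinite strip of $S_{2,4}(F)$. The freedom one would exploit in a hands-on argument is that membership in $\lang{S_{2,4}(F)}$ constrains only the bottom two rows on the columns of $w$ and leaves the top four rows entirely free, giving room to reselect the upper rows near the boundary and taper the configuration into an all-zero region; certifying that such a continuation exists uniformly for every boundary configuration is exactly the equality of minimal automata established above.
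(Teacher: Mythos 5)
Your proposal is correct, but it takes a genuinely different route from the paper's. You share the framing: $\lang{S_{2,4}(F)} \subseteq L_{2,4}(F)$ is immediate, $L_{2,4}(F)$ is regular and factor-closed, and $S_{2,4}(F)$ is the largest subshift whose language it contains, so the lemma is equivalent to a decidable equality of two regular languages. You then discharge this by brute force: build an automaton for $L_{2,4}(F)$, trim to the states on bi-infinite paths to obtain $\lang{S_{2,4}(F)}$ (this characterization is correct, by a K\"onig's-lemma argument on runs), and test equality --- a sound procedure, and the test does pass. The paper instead avoids the global language comparison: its machine-checked ingredient is the single local fact that $\lang{S_{2,1}(F)}$ contains \emph{all} words of length $6$ over the column alphabet $A^2$, i.e.\ every width-$6$, height-$2$ pattern extends upward by one row without creating a pattern of $F$. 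Given $w \in L_{2,4}(F)$ with a witnessing height-$6$ rectangle $P$, the two rightmost columns of $P$ become such a length-$6$ word under the rotation $\circlearrowright$; since $F = \circlearrowright(F)$ for the Game of Life, $P$ extends one column to the right, symmetrically to the left, and iterating gives indefinite two-sided extendability of $w$ within $L_{2,4}(F)$, hence $w \in \lang{S_{2,4}(F)}$ by compactness. Note that this extension never modifies the witness $P$: contrary to the expectation in your last paragraph, no re-selection of the top four rows near the seam is needed. As for what each approach buys: yours is fully generic --- it applies verbatim to any CA with stable traces and is essentially the computation one would run absent any trick --- but it is the heavier computation and yields no structural insight; the paper's version reduces the computer check to a $2^{12}$-case local extension property (in principle verifiable by hand, as the paper notes) and isolates exactly why $C = 0$ works, namely that any two adjacent legal columns of a height-$6$ strip admit a legal flanking column.
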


\begin{proof}
  It can be checked, either with a case analysis or by computer, that $S_{2, 1}(F)$ contains all words of length $6$ over its alphabet.
  Given a word $w \in L_{2,4}(F)$, let $P$ be a height-$6$ rectangle whose two bottom rows form $w$ and that contains no pattern from $F$.
  The two rightmost columns of $P$ are in ${\circlearrowright}(S_{2, 1}(F))$, so $P$ can be extended to the right by one column without introducing a pattern of $F$.
  Symmetrically, it can be extended to the left, and thus $w$ is extendable indefinitely in both directions within $L_{2,4}(F)$.
\end{proof}

Our program in Appendix~\ref{sec:Program} also verifies Lemma~\ref{lem:GoLConstant}.


\begin{theorem}
\label{thm:Main}
Let $g$ be the Game of Life. Then
\begin{itemize}
\item semilinear configurations are dense in $g^{-1}(y)$ for every finite-support configuration $y$,
\item if $P \in \{0,1\}^{[0,M) \times [0,N)}$ and $\pad^4(P)$ admits preimage $Q$, then $\conf(P)$ is not a Garden of Eden, and admits a preimage $x \in \{0,1\}^{\Z^2}$ with
\[ x|_{[-1,M] \times [-1,N]} = Q|_{[-1,M] \times [-1,N]} \]
\item $\FinG(g)$ reduces in polynomial time to orphans, in particular $\FinG(g)$ is in co-NP.
\end{itemize}
\end{theorem}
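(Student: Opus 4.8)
The plan is to assemble the three claims from the general machinery already established, specialized to the Game of Life via the computer-verified constants of Theorem~\ref{thm:GoLPeriodizable} and Lemma~\ref{lem:GoLConstant}. Write $g$ for the Game of Life, $r = 1$ for its radius, and $n = 2r = 2$, and let $F$ be the forbidden patterns of $g^{-1}(0^{\Z^2})$, i.e.\ the patterns in $\{0,1\}^{[-1,1]^2}$ mapping to $1$ under the local rule. Theorem~\ref{thm:GoLPeriodizable} tells us that $F$ has both stable and periodizable one-sided traces, witnessed by $\ell = 4$, $k = 3$, $p = 3$. Everything below is then a matter of feeding these numbers into Lemma~\ref{lem:PeriodizableImplies} and Lemma~\ref{lem:StableImplies} and tracking the resulting regions. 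For the first bullet in particular, since $F$ has one-sided periodizable traces, Lemma~\ref{lem:PeriodizableImplies} applies verbatim and yields that semilinear configurations are dense in $g^{-1}(y)$ for every $y \in \Fin(\{0,1\})$; no further Game-of-Life input is needed.

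For the second bullet I would invoke Lemma~\ref{lem:StableImplies}, whose padding constant is $c = C + \ell$, where $\ell$ is the stabilization index of the approximate traces and $C$ is the constant of Lemma~\ref{lem:Extensions} applied to the language $L_{n,\ell}(F)$. The crucial point is that Lemma~\ref{lem:GoLConstant} gives $C = 0$ for the Game of Life, while $\ell = 4$ comes from Theorem~\ref{thm:GoLPeriodizable}, so $c = 0 + 4 = 4$. Thus whenever $\pad^4(P)$ admits a preimage $Q$, Lemma~\ref{lem:StableImplies} produces a preimage $x$ of $\conf(P)$ agreeing with $Q$ on $[-r, M+r) \times [-r, N+r)$; substituting $r = 1$ yields exactly the claimed agreement region $[-1, M] \times [-1, N]$. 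In particular $\conf(P)$ has a preimage and is therefore not a Garden of Eden.

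For the third bullet, the polynomial-time reduction to orphans is already supplied by the final paragraph of the proof of Lemma~\ref{lem:StableImplies}: given $y \in \Fin(\{0,1\})$ with support in $[0,M) \times [0,N)$, output the finite pattern $y|_{[-4, M+4) \times [-4, N+4)}$, which is an orphan if and only if $y \in \FinG(g)$. Since testing whether a pattern of fixed finite shape is an orphan is in co-NP for any cellular automaton, this reduction places $\FinG(g)$ in co-NP. Alternatively, one can read co-NP membership directly off Lemma~\ref{lem:PeriodizableImplies}, whose certificate is an explicit semilinear preimage rather than an orphan test.

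I do not expect a genuine obstacle at this stage: by now the deep content lives in the computer-verified Theorem~\ref{thm:GoLPeriodizable} and in Lemma~\ref{lem:GoLConstant}, and the present theorem is essentially their packaging. The one place demanding care is confirming that the padding constant is \emph{exactly} $4$ rather than something larger, which rests entirely on the equality $C = 0$ of Lemma~\ref{lem:GoLConstant}: a nonzero $C$ would inflate $c = C + \ell$ and weaken the second bullet. I would therefore double-check the constant bookkeeping ($r = 1$, $n = 2$, $\ell = 4$, $C = 0$) and the half-open-interval arithmetic that converts $[-r, M+r) \times [-r, N+r)$ into $[-1, M] \times [-1, N]$.
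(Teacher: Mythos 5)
Your proposal is correct and follows exactly the paper's own route: the first bullet from Theorem~\ref{thm:GoLPeriodizable} together with Lemma~\ref{lem:PeriodizableImplies}, and the second and third bullets from the stability at $S_{2,4}(F)$ plugged into the proof of Lemma~\ref{lem:StableImplies} with the constant $C = 0$ of Lemma~\ref{lem:GoLConstant}, giving $c = C + \ell = 4$. Your explicit bookkeeping ($r=1$, $n=2$, $\ell=4$, $C=0$, and the conversion of $[-r,M+r) \times [-r,N+r)$ to $[-1,M] \times [-1,N]$) is exactly what the paper leaves implicit, and it checks out.
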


\begin{proof}
The first claim follows from Theorem~\ref{thm:GoLPeriodizable} and Lemma~\ref{lem:PeriodizableImplies}. The second and third follow from Theorem~\ref{thm:GoLPeriodizable} (whose proof gives stability at $S_{2, 4}(F)$), and then applying the proof of Lemma~\ref{lem:StableImplies} using the constant $C = 0$ from Lemma~\ref{lem:GoLConstant}.
\end{proof}

The constants are not included in the statements of the lemmas for simplicity, but their values are explicitly stated in the proofs.

In the above theorem, we have a constant bound on the periods of the semilinear sets, by the proof of Lemma~\ref{lem:PeriodizableImplies}. The vertical period $q! = (p |A|^{2 n (k+p)})! = 50331648!$ stated in the proof of Lemma~\ref{lem:PeriodizableImplies} is not practically usable. By analyzing the situation a bit more carefully, we see that it is better to use separate periods of at most $p |A|^{n (k+p)} = 12288$ on the west and east borders of $A_{\mathrm{north}}$ and $A_{\mathrm{south}}$. This is already usable, and should be hugely improvable by further analysis.

\begin{example}
\label{ex:MakingSemilinear}
We illustrate how to construct a semilinear preimage of a finite-support configuration from a preimage of a rectangular pattern containing its support. The following shows a pattern $P$ and its thickness-$4$ padding $\pad^4(P)$: 
\begin{center}
\pgfplotstableread{image_pattern_P.cvs}{\matrixfile}
\begin{tikzpicture}[scale = 0.25]

  \pgfplotstablegetrowsof{\matrixfile} 
  \pgfmathtruncatemacro{\totrow}{\pgfplotsretval}
  \pgfplotstablegetcolsof{\matrixfile} 
  \pgfmathtruncatemacro{\totcol}{\pgfplotsretval}
  
  \pgfplotstableforeachcolumn\matrixfile\as\col{
    \pgfplotstableforeachcolumnelement{\col}\of\matrixfile\as\colcnt{%
      \ifnum\colcnt=0
        \fill[white]($ -\pgfplotstablerow*(0,\cellht) + \col*(\cellwd,0) $) rectangle +(\cellwd,\cellht);
      \fi
      \ifnum\colcnt=1
        \fill[gray!50!white]($ -\pgfplotstablerow*(0,\cellht) + \col*(\cellwd,0) $) rectangle+(\cellwd,\cellht);
      \fi
      \ifnum\colcnt=2
        \fill[white]($ -\pgfplotstablerow*(0,\cellht) + \col*(\cellwd,0) $) rectangle +(\cellwd,\cellht);
        \draw ($ -\pgfplotstablerow*(0,\cellht) + (\cellwd*0.5,\cellht*0.5) + \col*(\cellwd,0) $) circle (0.1);
      \fi
      \ifnum\colcnt=3
        \fill[gray!50!white]($ -\pgfplotstablerow*(0,\cellht) + \col*(\cellwd,0) $) rectangle+(\cellwd,\cellht);
        \draw ($ -\pgfplotstablerow*(0,\cellht) + (\cellwd*0.5,\cellht*0.5) + \col*(\cellwd,0) $) circle (0.1);
      \fi
    }
  }
  \draw[black!20!white] (0, -8) grid (9,1);
  \draw[thick,black] (0, -8) rectangle (9,1);
    \draw[white] (-4, -12) rectangle (13,5);
\end{tikzpicture}
\hspace{1cm}
\pgfplotstableread{image_pattern_P.cvs}{\matrixfile}
\begin{tikzpicture}[scale = 0.25]

  \pgfplotstablegetrowsof{\matrixfile} 
  \pgfmathtruncatemacro{\totrow}{\pgfplotsretval}
  \pgfplotstablegetcolsof{\matrixfile} 
  \pgfmathtruncatemacro{\totcol}{\pgfplotsretval}
  
  \pgfplotstableforeachcolumn\matrixfile\as\col{
    \pgfplotstableforeachcolumnelement{\col}\of\matrixfile\as\colcnt{%
      \ifnum\colcnt=0
        \fill[white]($ -\pgfplotstablerow*(0,\cellht) + \col*(\cellwd,0) $) rectangle +(\cellwd,\cellht);
      \fi
      \ifnum\colcnt=1
        \fill[gray!50!white]($ -\pgfplotstablerow*(0,\cellht) + \col*(\cellwd,0) $) rectangle+(\cellwd,\cellht);
      \fi
      \ifnum\colcnt=2
        \fill[white]($ -\pgfplotstablerow*(0,\cellht) + \col*(\cellwd,0) $) rectangle +(\cellwd,\cellht);
        \draw ($ -\pgfplotstablerow*(0,\cellht) + (\cellwd*0.5,\cellht*0.5) + \col*(\cellwd,0) $) circle (0.1);
      \fi
      \ifnum\colcnt=3
        \fill[gray!50!white]($ -\pgfplotstablerow*(0,\cellht) + \col*(\cellwd,0) $) rectangle+(\cellwd,\cellht);
        \draw ($ -\pgfplotstablerow*(0,\cellht) + (\cellwd*0.5,\cellht*0.5) + \col*(\cellwd,0) $) circle (0.1);
      \fi
    }
  }
  \draw[black!20!white] (-4, -12) grid (13,5);
  \draw[thick,black] (0, -8) rectangle (9,1);
  \draw[black] (-4, -12) rectangle (13,5);
\end{tikzpicture}
\end{center}
We want to know if the finite-support configuration corresponding to $P$ is in the image of the Game of Life. The first step is to find a preimage for the padded pattern, for example using a SAT solver.\footnote{PicoSAT \cite{Bi08} solves this particular instance in under a second. The solution shown here was not constructed by a SAT solver.} One possible preimage (for the thickness four padding) is the following one.
\begin{center}
\pgfplotstableread{example_pattern_bad.cvs}{\matrixfile}
\begin{tikzpicture}[scale = 0.25]

  \pgfplotstablegetrowsof{\matrixfile} 
  \pgfmathtruncatemacro{\totrow}{\pgfplotsretval}
  \pgfplotstablegetcolsof{\matrixfile} 
  \pgfmathtruncatemacro{\totcol}{\pgfplotsretval}
  
  \pgfplotstableforeachcolumn\matrixfile\as\col{
    \pgfplotstableforeachcolumnelement{\col}\of\matrixfile\as\colcnt{%
      \ifnum\colcnt=0
        \fill[white]($ -\pgfplotstablerow*(0,\cellht) + \col*(\cellwd,0) $) rectangle +(\cellwd,\cellht);
      \fi
      \ifnum\colcnt=1
        \fill[gray!50!white]($ -\pgfplotstablerow*(0,\cellht) + \col*(\cellwd,0) $) rectangle+(\cellwd,\cellht);
      \fi
      \ifnum\colcnt=2
        \fill[white]($ -\pgfplotstablerow*(0,\cellht) + \col*(\cellwd,0) $) rectangle +(\cellwd,\cellht);
        \draw ($ -\pgfplotstablerow*(0,\cellht) + (\cellwd*0.5,\cellht*0.5) + \col*(\cellwd,0) $) circle (0.1);
      \fi
      \ifnum\colcnt=3
        \fill[gray!50!white]($ -\pgfplotstablerow*(0,\cellht) + \col*(\cellwd,0) $) rectangle+(\cellwd,\cellht);
        \draw ($ -\pgfplotstablerow*(0,\cellht) + (\cellwd*0.5,\cellht*0.5) + \col*(\cellwd,0) $) circle (0.1);
      \fi
    }
  }
  \draw[black!20!white] (0, -18) grid (19,1);
  \draw[black] (0, -18) rectangle (19,1);
  \draw[thick,black] (5, -13) rectangle (14,-4);
\end{tikzpicture}
\end{center}
The occurrences of $1$s in the image correspond to the black dots, and the $1$s in the preimage are gray tiles. One can check that this preimage cannot be extended to the south without introducing a $1$ in the image, can be extended to the west by one line (but not two), and to the east by two lines (but not three).

Lemma~\ref{lem:StableImplies} implies that, since we found a preimage for the thickness-$4$ padding, there exists a preimage for the finite-support configuration $y = \conf(P)$. Figure~\ref{fig:MakingSemilinear} shows such a preimage, and the process of extracting a semilinear preimage from it: The figures denote central $[-16, 16]^2$ patterns of configurations $x, x^2, x^3$, and their common Game of Life image $y$, whose support contained in $[-N, N]^2 = [-4, 4]^2$ is $P$. These correspond to $x, x^2, x^3, y$ in the proof of Lemma~\ref{lem:PeriodizableImplies}. The $1$-cells of the patterns $x, x^2, x^3$ are gray tiles, the $1$-cells in $y$ are black dots, and the inner square is $[-N, N]^2$. The first configuration $x$ was sampled together with $y$, by picking $1$-cells of $x$ from a Bernoulli distribution, and then adding $1$-cells outside $[-N, N]^2$ until the image $y = g(x)$ outside $[-N, N]^2$ contained only $0$-cells. We have $S_{2,4}(F) \subset P_{2,3,3}(F)$, and $r = 1, n = 2, \ell = 4, k = 3, p = 3$ in the notation of the proof of Lemma~\ref{lem:PeriodizableImplies}.

The second configuration, $x^2$ was obtained by periodizing the contents below and above $[-N, N]^2$ using $S_{2,4}(F) \subset P_{2,3,3}(F)$, and the periodic half-planes $\Z \times [9, \infty)$ and $\Z \times (-\infty, -9]$ are delineated (here $N+k+r+1 = 9$). The continuations picked are the lexicographically minimal ones (in the visible area) with preperiod $3$ and period $3$, moving counterclockwise. 
The third configuration $x^3$ is obtained from $x^2$ by periodizing on the west and east, again using $S_{2,4}(F) \subset P_{2,3,3}(F)$ and picking lexicographically minimal continuations. The regions
\[ A_{\mathrm{north}} = [-N-r, N+r] \times [N+r+k+1, \infty) = [-5, 5] \times [9, \infty), \]
\[ A_{\mathrm{south}} =[-N-r, N+r] \times (-\infty, -N-r-k-1] = [-5, 5] \times (-\infty, -9], \]
\[ A_{\mathrm{west}} = (-\infty, -N-r-k-1] \times \Z = (-\infty, -9] \times \Z, \]
\[ A_{\mathrm{east}} = [N+r+k+1, \infty) \times \Z = [9, \infty) \times \Z, \]
are delineated.

\begin{figure}[h!]
\begin{center}
\begin{subfigure}[b]{0.33\textwidth}
 \pgfplotstableread{example_pattern.cvs}{\matrixfile}
\begin{tikzpicture}[scale = 0.11]

  \pgfplotstablegetrowsof{\matrixfile} 
  \pgfmathtruncatemacro{\totrow}{\pgfplotsretval}
  \pgfplotstablegetcolsof{\matrixfile} 
  \pgfmathtruncatemacro{\totcol}{\pgfplotsretval}
  
  \pgfplotstableforeachcolumn\matrixfile\as\col{
    \pgfplotstableforeachcolumnelement{\col}\of\matrixfile\as\colcnt{%
      \ifnum\colcnt=0
        \fill[white]($ -\pgfplotstablerow*(0,\cellht) + \col*(\cellwd,0) $) rectangle +(\cellwd,\cellht);
      \fi
      \ifnum\colcnt=1
        \fill[gray!50!white]($ -\pgfplotstablerow*(0,\cellht) + \col*(\cellwd,0) $) rectangle+(\cellwd,\cellht);
      \fi
      \ifnum\colcnt=2
        \fill[white]($ -\pgfplotstablerow*(0,\cellht) + \col*(\cellwd,0) $) rectangle +(\cellwd,\cellht);
        \draw ($ -\pgfplotstablerow*(0,\cellht) + (\cellwd*0.5,\cellht*0.5) + \col*(\cellwd,0) $) circle (0.1);
      \fi
      \ifnum\colcnt=3
        \fill[gray!50!white]($ -\pgfplotstablerow*(0,\cellht) + \col*(\cellwd,0) $) rectangle+(\cellwd,\cellht);
        \draw ($ -\pgfplotstablerow*(0,\cellht) + (\cellwd*0.5,\cellht*0.5) + \col*(\cellwd,0) $) circle (0.1);
      \fi
    }
  }
  \draw[black!20!white] (0, -32) grid (33,1);
  \draw[black] (0, -32) rectangle (33,1);
  \draw[thick,black] (12, -20) rectangle (21,-11);
\end{tikzpicture}
\end{subfigure}
\begin{subfigure}[b]{0.33\textwidth}
\pgfplotstableread{example_pattern_per_sn.cvs}{\matrixfile}
\begin{tikzpicture}[scale = 0.11]

  \pgfplotstablegetrowsof{\matrixfile} 
  \pgfmathtruncatemacro{\totrow}{\pgfplotsretval}
  \pgfplotstablegetcolsof{\matrixfile} 
  \pgfmathtruncatemacro{\totcol}{\pgfplotsretval}
  
  \pgfplotstableforeachcolumn\matrixfile\as\col{
    \pgfplotstableforeachcolumnelement{\col}\of\matrixfile\as\colcnt{%
      \ifnum\colcnt=0
        \fill[white]($ -\pgfplotstablerow*(0,\cellht) + \col*(\cellwd,0) $) rectangle +(\cellwd,\cellht);
      \fi
      \ifnum\colcnt=1
        \fill[gray!50!white]($ -\pgfplotstablerow*(0,\cellht) + \col*(\cellwd,0) $) rectangle+(\cellwd,\cellht);
      \fi
      \ifnum\colcnt=2
        \fill[white]($ -\pgfplotstablerow*(0,\cellht) + \col*(\cellwd,0) $) rectangle +(\cellwd,\cellht);
        \draw ($ -\pgfplotstablerow*(0,\cellht) + (\cellwd*0.5,\cellht*0.5) + \col*(\cellwd,0) $) circle (0.1);
      \fi
      \ifnum\colcnt=3
        \fill[gray!50!white]($ -\pgfplotstablerow*(0,\cellht) + \col*(\cellwd,0) $) rectangle+(\cellwd,\cellht);
        \draw ($ -\pgfplotstablerow*(0,\cellht) + (\cellwd*0.5,\cellht*0.5) + \col*(\cellwd,0) $) circle (0.1);
      \fi
    }
  }

  \draw[black!20!white] (0, -32) grid (33,1);
    
  \draw[black] (0, -32) rectangle (33,1);
  \draw[black,thick] (12, -20) rectangle (21,-11);
  
  \draw[black] (0, -7) -- (33, -7);
   \draw[black] (0, -24) -- (33, -24);
  
\end{tikzpicture}
\end{subfigure}
\begin{subfigure}[b]{0.32\textwidth}
\pgfplotstableread{example_pattern_per_snew.cvs}{\matrixfile}
\begin{tikzpicture}[scale = 0.11]

  \pgfplotstablegetrowsof{\matrixfile} 
  \pgfmathtruncatemacro{\totrow}{\pgfplotsretval}
  \pgfplotstablegetcolsof{\matrixfile} 
  \pgfmathtruncatemacro{\totcol}{\pgfplotsretval}
  
  \pgfplotstableforeachcolumn\matrixfile\as\col{
    \pgfplotstableforeachcolumnelement{\col}\of\matrixfile\as\colcnt{%
      \ifnum\colcnt=0
        \fill[white]($ -\pgfplotstablerow*(0,\cellht) + \col*(\cellwd,0) $) rectangle +(\cellwd,\cellht);
      \fi
      \ifnum\colcnt=1
        \fill[gray!50!white]($ -\pgfplotstablerow*(0,\cellht) + \col*(\cellwd,0) $) rectangle+(\cellwd,\cellht);
      \fi
      \ifnum\colcnt=2
        \fill[white]($ -\pgfplotstablerow*(0,\cellht) + \col*(\cellwd,0) $) rectangle +(\cellwd,\cellht);
        \draw ($ -\pgfplotstablerow*(0,\cellht) + (\cellwd*0.5,\cellht*0.5) + \col*(\cellwd,0) $) circle (0.1);
      \fi
      \ifnum\colcnt=3
        \fill[gray!50!white]($ -\pgfplotstablerow*(0,\cellht) + \col*(\cellwd,0) $) rectangle+(\cellwd,\cellht);
        \draw ($ -\pgfplotstablerow*(0,\cellht) + (\cellwd*0.5,\cellht*0.5) + \col*(\cellwd,0) $) circle (0.1);
      \fi
    }
  }
  
  \draw[black!20!white] (0, -32) grid (33,1);
  \draw[black] (0, -32) rectangle (33,1);
  \draw[black,thick] (12, -20) rectangle (21,-11);
  
  \draw[black] (8, -32) -- (8, 1);
  \draw[black] (25, -32) -- (25, 1);
  
  \draw[black] (11, 1) -- (11, -7) -- (22, -7) -- (22, 1);
  \draw[black] (11, -32) -- (11, -24) -- (22, -24) -- (22, -32);
  
\end{tikzpicture}
\end{subfigure}
\end{center}
\caption{Turning an arbitrary preimage of a finite-support configuration into a semilinear one.}
\label{fig:MakingSemilinear}
\end{figure}
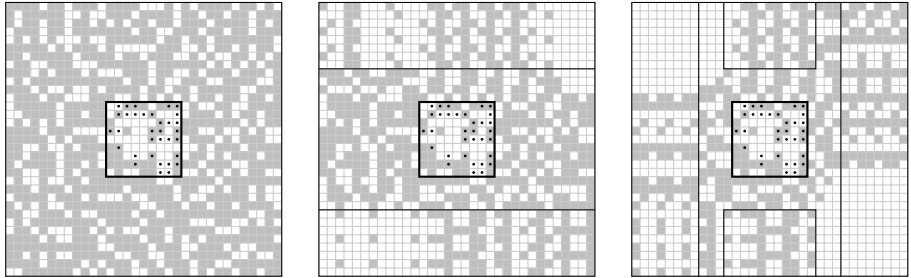

The configuration $x^4$ would be obtained by finding a repetition in the rows to the north and south of the pattern. Only the stripe at the east border of $A_{\mathrm{north}}$ is not yet periodic, and we can simply replace the entire quadrant with $1$-cells to obtain a semilinear preimage. \qee
\end{example}

\section{Questions and additional observations}

The following proposition shows that (the first claim in) Theorem~\ref{thm:Main} is no longer true if semilinear configurations are replaced by finite-support or co-finite-support configurations. Say a configuration is \emph{asymptotically horizontally $N$-periodic} if it is asymptotic to a configuration with period $(N,0)$.

\begin{proposition}
\label{prop:AsympPeriodicNotDense}
Let $g$ be the Game of Life and $N \in \N$. Then asymptotically horizontally $N$-periodic configurations are not dense in $g^{-1}(0^{\Z^2})$.
\end{proposition}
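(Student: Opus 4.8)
\emph{Strategy.} Density in the subshift $X = g^{-1}(0^{\Z^2})$ means that every central window of every $x \in X$ is matched by some configuration of the class, so to prove non-density it suffices to exhibit a single explicit finite pattern $w$ that (i) occurs in some configuration of $X$, yet (ii) occurs in no asymptotically horizontally $N$-periodic configuration of $X$. The plan is to take for $w$ the same kind of explicit height-$2$ word produced by the trace computation behind Theorem~\ref{thm:GoLPeriodizable}, realize it inside a concrete preimage of $0^{\Z^2}$, and then show that any preimage containing $w$ is forced into an infinite structure that no genuinely $(N,0)$-periodic configuration can carry.

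\emph{The witness and its rigidity.} First I would fix an explicit two-row word $w$ lying in the trace $T_2(F)$, where $F$ is the forbidden set for $g^{-1}(0^{\Z^2})$, chosen (as in the optimality discussion following Theorem~\ref{thm:GoLPeriodizable}) so that $p = 3$ is forced: every extension of $w$ upward into an upper half-plane of $X$ is vertically $3$-periodic with minimal period exactly $3$. Realizability (i) is immediate from $w \in T_2(F)$, since such a word extends to an element of $X^\N_F$, which extends downward to a full configuration of $X$ by padding below with rows of $0$s once its support is exhausted. The heart of the matter is a rigidity lemma: the period-$3$ half-plane forced above $w$ is \emph{chiral} in the horizontal direction, in the sense that the phase of its vertical $3$-periodicity is nonconstant across columns and can only change in one direction as one moves east. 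I would verify this, as for Theorem~\ref{thm:GoLPeriodizable} itself, by inspecting the minimal automaton of $\lang{S_{2,\ell}(F)}$ and its rotations $\lang{S_{2,\ell}(\circlearrowright^i(F))}$, locating $w$ in a part of the automaton from which the two horizontal ends of the forced half-plane cannot return to a common phase.

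\emph{The periodicity contradiction.} Suppose $x \in X$ contains $w$ in its central window and is asymptotic to some $y$ of period $(N,0)$, so that $x$ and $y$ agree outside a finite box $B$. Because $y$ is exactly $(N,0)$-periodic, its behaviour as $a \to -\infty$ and as $a \to +\infty$ is one and the same: both tails are the single width-$N$ vertical strip $y|_{[0,N) \times \Z}$ repeated. Outside $B$ the configuration $x$ coincides with $y$, so the forced far-fields of $x$ to the west and to the east must both reduce to this one strip. For $N$ with $3 \nmid N$ this is already impossible by a $\gcd(N,3)=1$ argument, which would force horizontal period $1$ and hence a constant phase, contradicting the nonconstant phase of the forced structure; for the remaining $N$ the rigidity lemma says the two far-fields realize different phases of the period-$3$ structure, which again cannot both equal the same periodic strip. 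This yields the contradiction for every $N$ simultaneously.

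\emph{Main obstacle.} The routine part is the $\gcd$ argument, which disposes of all $N$ with $3 \nmid N$; the genuine difficulty — and the reason a specific word must be exhibited rather than an arbitrary period-$3$ pattern — is ruling out the case $3 \mid N$, where incommensurability alone fails. This is exactly where the one-sided (chiral) nature of the forced half-plane is essential, and establishing that chirality rigorously is the crux. I expect to reduce it to a finite, machine-checkable statement about the trace automata of $F$ and its rotations, of the same flavour as Lemma~\ref{lem:GoLConstant} and Theorem~\ref{thm:GoLPeriodizable}, and to certify it with the program of Appendix~\ref{sec:Program}.
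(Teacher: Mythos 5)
Your overall strategy (exhibit a finite pattern realizable in $g^{-1}(0^{\Z^2})$ whose vertical continuations are rigidly forced, then contradict asymptotic periodicity) is the paper's strategy, but your proof has a fatal quantifier problem: your witness $w$ has a \emph{fixed} width, independent of $N$. The forcing emanating from $w$ is confined to the columns that actually meet $w$ --- columns to the far west and east of $w$ are completely unconstrained (they can be filled with $0$s, which are trivially $3$-periodic), so there is no ``forced half-plane'' with two horizontal ends at infinity, and hence no mechanism for your chirality argument. Concretely, if $N$ exceeds the width of $w$, take $x \in g^{-1}(0^{\Z^2})$ containing $w$; the asymptotic $(N,0)$-periodic configuration $y$ only has to agree with $x$ outside a finite box, and all the forced columns of $x$ fit inside a single horizontal period of $y$, so nothing prevents $y$ from simply repeating that block of forced columns every $N$ steps. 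No contradiction arises, precisely in the interesting case. The paper closes this hole by making the witness scale with $N$: the pattern $P_n$ contains a stretchable segment $\bigl( \dd{0}{0} \; \dd{0}{1} \bigr)^n$, and one picks $n > N$, so that the region of fully forced, bi-infinite, vertically $3$-periodic columns is \emph{wider than $N$}; then two explicitly distinct forced columns sit at horizontal distance exactly $N$, and since they agree with $y$ above any finite box while being vertically periodic, they would have to be equal --- contradiction.

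Two further steps of yours would also fail as written. First, the ``$\gcd(N,3)=1$ argument'' is vacuous: the horizontal period $N$ and the vertical period $3$ live in different directions and do not interact arithmetically, so incommensurability forces nothing (and indeed the paper's argument makes no case distinction on $N \bmod 3$). Second, your realizability step --- extending the half-plane above $w$ downward ``by padding below with rows of $0$s'' --- is not legal: a row of $0$s beneath live cells typically creates births in the image, and membership of $w$ in the one-sided trace $T_2(F)$ gives an upper half-plane only, not a bi-infinite configuration. The paper instead realizes $P_n$ by an explicit construction: pad with $0$s to the left and right, and continue each forced column $3$-periodically in both vertical directions, with the repeating block chosen to contain an even number of $1$s. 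Your proposed rigidity verification by automata is in the right spirit (the paper's forced-continuation claim is a finite, checkable deduction), but the rigidity you need is the paper's column-forcing for a width-$\sim N$ pattern, not a phase-chirality statement about a fixed word.
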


Obviously the same is true for vertically periodic configurations.

\begin{proof}
One can verify that $g^{-1}(0^{\Z^2})$ contains the pattern
\[ P_n = 
 \dd{0}{0} \; \dd{1}{1} \; \dd{0}{0} \; \dd{0}{1} \; \dd{0}{0} \; \dd{1}{1} \;
\dd{0}{0} \; \dd{1}{0} \; \dd{0}{0} \; \dd{1}{1} \; \dd{0}{0} \; \dd{1}{0} \;
\dd{0}{0} \; \dd{0}{1} \; \dd{0}{0} \; \dd{1}{0} \; \dd{0}{0} \; \dd{0}{1} 
\left( \dd{0}{0} \; \dd{0}{1} \right)^n \dd{0}{0} \]
for any $n \in \N$, by continuing it left and right with $0$s, and then extending each column with period-$3$ so that the repeating pattern has an even number of $1$s.

It turns out that this is the only possible continuation of the columns intersecting the pattern: At an occurrence of
\[ \begin{matrix}
0 & 1 & 0 & 0 & 0 & 1 & 0 \\
0 & 1 & 0 & 1 & 0 & 1 & 0 
\end{matrix} \]
one can easily deduce that the only possible continuation downward is $0001000$, and this forces the contents of the next row. The pattern is chosen so that this subpattern is seen on the first $3$ iterations, after which we repeat the original pattern, which determines the entire column.

To prove the claim, pick $n > N$. Then no preimage of $0^{\Z^2}$ containing $P_n$ has a horizontal $N$-period.
\end{proof}

The pattern above was found by generalizing the shortest word separating $S_{2,3}(F)$ from $S_{2,4}(F)$. It forces the period in both directions, thus cannot appear in a preimage of $0^{\Z^2}$ together with its $90$-degree rotation. Thus, it cannot directly be used to show that $g^{-1}(0^{\Z^2})$ does not have dense asymptotically doubly periodic points (when the period is not fixed). Indeed, we do not know whether asymptotically doubly periodic points are dense in $g^{-1}(0^{\Z^2})$.

Our first attempt at proving the decidability of the set of finite-support Gardens of Eden was to show that all finite-support configurations that have a preimage have a finite-support or co-finite-support preimage. This stays open.

\begin{question}
\label{q:DoesEvery}
Does every finite-support configuration in the image of the Game of Life have a finite-support preimage? A co-finite-support preimage? An asymptotically doubly periodic preimage with some fixed periods $(N,0)$, $(0,N)$?
\end{question}

The first and second subquestions are equivalent, as observed by user dvgrn in \cite{OrphanvsGoE}: in the SFT $g^{-1}(0^{\Z^2})$, the rectangular all-$1$ pattern with support $[-n+2, n+2]^2 \setminus [-n, n]^2$ can be glued to the all-$0$ pattern with support $[-n-c-2, n+c+2]^2 \setminus [-n-c, n+c]^2$, for any large enough $c$, and vice versa when $0$ and $1$ are exchanged.

Proposition~\ref{prop:AsympPeriodicNotDense} shows that ``yes'' answers in Question~\ref{q:DoesEvery} cannot be proved by only modifying an arbitrary preimage outside a finite region. These subquestions would be solved in the negative by finding a finite-support configuration $y \in \{0,1\}^{\Z^2}$ that is not a Garden of Eden but whose every preimage contains an occurrence of some $P_n$ from the proof of Proposition~\ref{prop:AsympPeriodicNotDense} outside the convex hull of the support of $y$.

Anecdotal evidence for Question~1 is that, as mentioned, the preimages shown for $P$ and its paddings in Example~\ref{ex:MakingSemilinear} were not found by a SAT solver, and in fact if the problem is fed into PicoSAT with a lexicographic order on the positions, the preimage that is found for the $4$-padding of $P$ in Example~\ref{ex:MakingSemilinear} always has two layers of $1$-cells in the two outermost layers, in other words finding a cofinite-support preimage for this particular pattern seems to be easier than finding a ``bad'' preimage (so we constructed one by other methods). If the problem is obfuscated (by shuffling the cell positions) before feeding it to the solver, it occasionally gives solutions that do not have $1$-cells on the border of the rectangle, but this still happens very rarely. 

Much of the study of the Game of Life concentrates on finite configurations. Thus, the following question seems very relevant, if the first subquestion of Question~\ref{q:DoesEvery} has a negative answer.

\begin{question}
Is it decidable whether a given finite-support configuration has a finite-support preimage?
\end{question}

Finite-support Gardens of Eden are $\Sigma^0_1$ (in the arithmetical hierarchy) and non-trivially decidable (by this paper), since if there is no preimage, there is an orphan. Finite-support configurations without a finite-support preimage, on the other hand, are $\Pi^0_1$, since we can prove the \emph{no}-instances by exhibiting a preimage, but there is no obvious reason there should be a finite certificate for not having one.

In Theorem~\ref{thm:Main}, the padding thickness $4$ is optimal if we are not allowed to change the preimage, by positioning any word of $\lang{S_{2, 3}(F)} \setminus \lang{S_{2, 4}(F)}$ on the boundary of the rectangle. We do not know if it is optimal if the preimage can be changed, i.e.\ we do not know whether there exist patterns $P$ such that $\pad^3(P)$ has a preimage pattern but $\conf(P)$ is a Garden of Eden. The optimal constant is at least $1$: the pattern
\begin{center}
\pgfplotstableread{orphan_after_pad.cvs}{\matrixfile}
\begin{tikzpicture}[scale = 0.25]

  \pgfplotstablegetrowsof{\matrixfile} 
  \pgfmathtruncatemacro{\totrow}{\pgfplotsretval}
  \pgfplotstablegetcolsof{\matrixfile} 
  \pgfmathtruncatemacro{\totcol}{\pgfplotsretval}
  
  \pgfplotstableforeachcolumn\matrixfile\as\col{
    \pgfplotstableforeachcolumnelement{\col}\of\matrixfile\as\colcnt{%
      \ifnum\colcnt=0
        \fill[white]($ -\pgfplotstablerow*(0,\cellht) + \col*(\cellwd,0) $) rectangle +(\cellwd,\cellht);
      \fi
      \ifnum\colcnt=1
        \fill[gray!50!white]($ -\pgfplotstablerow*(0,\cellht) + \col*(\cellwd,0) $) rectangle+(\cellwd,\cellht);
      \fi
      \ifnum\colcnt=2
        \fill[white]($ -\pgfplotstablerow*(0,\cellht) + \col*(\cellwd,0) $) rectangle +(\cellwd,\cellht);
        \draw ($ -\pgfplotstablerow*(0,\cellht) + (\cellwd*0.5,\cellht*0.5) + \col*(\cellwd,0) $) circle (0.1);
      \fi
      \ifnum\colcnt=3
        \fill[gray!50!white]($ -\pgfplotstablerow*(0,\cellht) + \col*(\cellwd,0) $) rectangle+(\cellwd,\cellht);
        \draw ($ -\pgfplotstablerow*(0,\cellht) + (\cellwd*0.5,\cellht*0.5) + \col*(\cellwd,0) $) circle (0.1);
      \fi
    }
  }
  \draw[black!20!white] (0, -8) grid (33,1);
  \draw[black] (0, -8) rectangle (33,1);
\end{tikzpicture}
\end{center}
was obtained by modifying one bit in the orphan of Banks \cite{Ga83} (marked by a black dot). It is not an orphan, but PicoSAT reports its thickness-$1$ zero-padding to be an orphan.

One may ask if it is important in Theorem~\ref{thm:Main} that $P$ is rectangular. For the result that finite-support Gardens of Eden are co-NP, this is not essential: as long as the inputs of size $n$ specify values only in a polynomial-sized rectangle in $n$, to prove that the configuration $\conf(P)$ corresponding to a given pattern $P$ is not a Garden of Eden, we can extend the $P$ to a rectangular one by adding $0$-cells and apply our methods. However, the second claim of the theorem fails for all convex shapes except rectangles, at large enough scales.

For a general $D \subset \Z^2$, we define the \emph{$0$-padding of thickness $C$} of $P \in A^D$ as the pattern $Q$ with domain $E = (D + [-C, C]^2) \cap \Z^2$, where $D + [-C, C]^2$ is interpreted in $\R^2$ and $[-C, C] \subset \R$ is a continuous interval, defined by $Q|_D = P$, $Q|_{E \setminus D} = 0^{E \setminus D}$.

\begin{proposition}
Let $g$ be the Game of Life and $N \in \N$. Let $K \subset \R^2$ be a compact convex set that is the closure of its interior. If $K$ is not a rectangle aligned with the standard axes of $\R^2$, then for all $C \in \R$, for all large enough $r > 0$, the $0$-padding of thickness $C$ of the all-zero pattern $P$ with shape $r K \cap \Z^2$ admits a preimage pattern $Q$, such that the subpattern of $Q$ that maps to $P$ does not extend to a preimage of $\conf(P)$.
\end{proposition}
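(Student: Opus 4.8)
The plan is to exploit the one feature that singles out rectangles among convex bodies: a compact convex $K\subset\R^2$ equals the product $\mathrm{proj}_x(K)\times\mathrm{proj}_y(K)$ of its coordinate projections \emph{iff} it is an axis-aligned rectangle. So if $K$ is not such a rectangle, there is a point $(c,d)$ with $c\in\mathrm{proj}_x(K)$, $d\in\mathrm{proj}_y(K)$ but $(c,d)\notin K$, which (nudging slightly, using that $K$ is the closure of its interior) we may take with $c,d$ interior to the respective projections; set $\rho=\mathrm{dist}((c,d),K)>0$. After scaling, the line $x=rc$ and the line $y=rd$ both meet $rK$, so the integer column $\lfloor rc\rfloor$ and row $\lfloor rd\rfloor$ pass through $D=rK\cap\Z^2$ with room to spare for large $r$, while $(rc,rd)$ sits at distance $r\rho$ from $rK$. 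Because the padding thickness $C$ is fixed, for all large $r$ the padded region $E=(rK+[-C,C]^2)\cap\Z^2$ stays at distance greater than the side length $L$ of the pattern $P_n$ used below from $(rc,rd)$; in particular the $L\times L$ integer box $J$ with corner $(\lfloor rc\rfloor,\lfloor rd\rfloor)$ is disjoint from $E$. This is the precise sense in which the crossing point escapes the padding no matter how thick it is, and it is exactly what fails for a rectangle, where every column and every row meeting $D$ cross inside $D\subset E$ (consistent with Theorem~\ref{thm:Main}).

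Next I would build $Q$ from the forcing pattern $P_n$ of Proposition~\ref{prop:AsympPeriodicNotDense} and its clockwise rotation $\circlearrowright(P_n)$: inside any preimage of $0^{\Z^2}$, $P_n$ forces each of its columns to be the unique vertically $3$-periodic continuation in both directions, and $\circlearrowright(P_n)$ does the same for its rows horizontally. I place a copy of $P_n$ horizontally inside $D$ on the columns of $J$ (at a height where those columns lie in $D$, possible for large $r$ since $c$ is interior to $\mathrm{proj}_x(K)$), and $\circlearrowright(P_n)$ vertically inside $D$ on the rows of $J$. On $E$ the forced columns and forced rows are confined to $E$, so any cell lying in both would lie in $\{x\}\times\{y\}$ with $x$ a column of $J$ and $y$ a row of $J$, i.e.\ in $J$; but $J\cap E=\emptyset$. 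Thus within $E$ the two families of period-$3$ lines are disjoint and in fact far apart, each is a legal piece of a zero-preimage (continued by $0$s on its sides, as in the proof of Proposition~\ref{prop:AsympPeriodicNotDense}), and filling the remaining cells of $E$ with $0$s yields $Q$ with $g(Q)=0$ on the interior of $E$, i.e.\ a preimage of $\pad^C(P)=0^E$.

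Finally, suppose $Q|_{D+[-1,1]^2}$ extended to some $x\in g^{-1}(0^{\Z^2})=g^{-1}(\conf(P))$. Then $x$ contains both $P_n$ and $\circlearrowright(P_n)$, so column $\lfloor rc\rfloor$ is forced vertically $3$-periodic on all of $\Z$ and row $\lfloor rd\rfloor$ is forced horizontally $3$-periodic on all of $\Z$; these determinations meet at $(\lfloor rc\rfloor,\lfloor rd\rfloor)$, where—after fixing the horizontal offsets of the two patterns so the forced period-$3$ values clash, exactly as in the observation after Proposition~\ref{prop:AsympPeriodicNotDense} that $P_n$ and its rotation cannot coexist in a preimage of $0^{\Z^2}$—we reach a contradiction. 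Hence no such $x$ exists, and $Q$ is the desired preimage. The main obstacle is the bookkeeping of the second paragraph: verifying that the two forced period-$3$ regions can be simultaneously realized inside $E$ and capped off with $0$s near $\partial E$ without creating a $1$ in the image (this is where the convexity of $E=rK+[-C,C]^2$ and the one-directional nature of the forcing are used, guaranteeing that each forced band meets $\partial E$ cleanly and that the two bands never become adjacent within $E$), together with pinning down the relative phase of $P_n$ and $\circlearrowright(P_n)$ that guarantees an honest clash at the crossing cell rather than an accidental agreement.
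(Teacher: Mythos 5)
Your proposal is correct and follows essentially the same route as the paper's proof: both characterize axis-aligned rectangles among compact convex sets (your projection-product criterion is equivalent to the paper's ``$(a,b),(c,d)\in K \Rightarrow (a,d),(c,b)\in K$''), place a copy of $P_0$ and of ${\circlearrowright}(P_0)$ inside $rK$ so that their forced $3$-periodic bands would cross near the excluded point at distance linear in $r$ from $rK$ (hence outside the thickness-$C$ padding for large $r$), and conclude that the bands can be realized disjointly and capped with $0$s inside the padded region, while any full-plane extension forces the incompatible crossing. Your worry about ``pinning down the relative phase'' is already discharged by the observation you cite, namely that $P_n$ and its $90$-degree rotation can never coexist in a preimage of $0^{\Z^2}$, so no phase adjustment is needed.
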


The constant $C$ can be replaced by any sublinear function of $r$.

\begin{proof}
Among compact convex subsets of $\R^2$, the axis-aligned rectangles are exactly the sets where $(a, b), (c, d) \in K$ implies $(a, d), (c, b) \in K$. 
Thus, suppose $K$ is not a rectangle, and pick $(a, b) \neq (c, d)$ such that w.l.o.g.\ $(a, d) \notin K$. By the assumption that $K$ is the closure of its interior, $(a, d)$ has positive distance to $K$, and we may assume $(a, b), (c, d)$ are interior points by moving them slightly if necessary.
For any $M \in \N$, there then exists $r > 0$ such that the set $r K \cap \Z^2$ contains a translate of the square grid $[-M, M]^2$ whose convex hull in turn contains $(r a, r b)$.
The same is true for $(c, d)$.


Let $P$ be the all-$0$ pattern of shape $r K \cap \Z^2$, and let $Q$ be a preimage pattern of $\pad^C(P)$ that contains a copy of the pattern $P_0$ from the proof of Proposition~\ref{prop:AsympPeriodicNotDense} near $(r a, r b)$ and a copy of the $90$-degree rotation ${\circlearrowright}(P_0)$ near $(r c, r d)$.
As long as $r$ is large enough compared to $C$, such a preimage exists, since we have enough room to put these patterns near $(r a, r b)$ and $(r c, r d)$, and then we can continue them $3$-periodically in both directions and fill the rest of $Q$ with $0$-cells.
The restriction of $Q$ to $r K \cap \Z^2$ does not extend to a preimage of $0^{\Z^2}$, since the $3$-periodic patterns forced by the copies of $P_0$ would intersect near $(r a, r d)$.
\end{proof}





It is also natural to ask whether the complexity-theoretic aspects of our results are optimal.
We have shown that for the Game of Life, the finite-support Gardens of Eden are polynomial-time reducible to orphans. For the other direction, we do not know any polynomial-time reduction, thus orphans could in principle be harder than finite-support Gardens of Eden. 
We conjecture that both problems are hard for co-NP, in particular are equally hard by our results.

\begin{conjecture}
Finite-support Gardens of Eden and rectangular orphans for the Game of Life are co-NP-complete (under the encodings of this paper, for polynomial time many-one reductions).
\end{conjecture}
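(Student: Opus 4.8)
The plan is to establish co-NP-hardness, since membership in co-NP is already in hand (trivially for orphans, and for $\FinG(g)$ by Lemma~\ref{lem:StableImplies} or Lemma~\ref{lem:PeriodizableImplies}). Equivalently, I would show that recognizing \emph{non-orphans} (rectangular patterns admitting a preimage) is NP-hard, via a polynomial-time many-one reduction from a suitable NP-complete problem. The natural source is Circuit-SAT, or better a planar variant such as Planar Circuit-SAT, whose planarity matches the two-dimensional geometry of the Game of Life and limits the need for wire crossings. The key observation making orphans the cleaner target is that a non-orphan only requires the chosen rectangular pattern to occur in \emph{some} image, so inside the rectangle the $3\times 3$ neighbourhoods are constrained by the fixed image while outside there is complete freedom.

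The core of the reduction is a library of \emph{preimage gadgets}: finite rectangular image patterns over $\{0,1\}$, together with designated preimage cells carrying Boolean values, such that the fixed image locally forces the Game-of-Life rule to realize a prescribed logical relation among those values. Concretely I would build (i) a \emph{wire} gadget forcing a bit to propagate unchanged along a line in the preimage; (ii) \emph{turn} and \emph{fan-out} gadgets for routing and duplicating a bit; (iii) a functionally complete family of gates, e.g.\ NOT and NAND; (iv) a \emph{crossover} gadget (or, reducing from a planar instance, an argument that crossovers can be routed away); and (v) \emph{clamp} gadgets forcing a chosen wire to a fixed value, used to assert that the circuit output is $1$. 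Each gadget needs a two-directional local correctness claim: for every Boolean input there is a compatible preimage fragment (completeness), and every preimage fragment compatible with the fixed image induces admissible Boolean values satisfying the gate relation (soundness). I would then tile these gadgets according to the input circuit to obtain a single rectangular image pattern $P_\Phi$ of size polynomial in $\Phi$, separated by $0$-mapping buffer regions so that gadgets do not interfere and the background can always be completed, using the flexibility of $g^{-1}(0^{\Z^2})$ guaranteed by Theorem~\ref{thm:GoLPeriodizable}.

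By construction $P_\Phi$ admits a preimage iff $\Phi$ is satisfiable, so $P_\Phi$ is a non-orphan iff $\Phi$ is satisfiable; this gives co-NP-hardness of rectangular orphans and hence co-NP-completeness. For $\FinG(g)$ I would reuse the same gadgets but present the image as a finite-support configuration, placing the gadget $1$-cells inside a large rectangle and setting the image to $0$ outside, so that the resulting finite-support configuration has an exact global preimage iff $\Phi$ is satisfiable; here the $0$-background constraint is again absorbed by Theorem~\ref{thm:GoLPeriodizable}. Combined with the polynomial-time reduction of $\FinG(g)$ to orphans already contained in the proof of Lemma~\ref{lem:StableImplies}, this yields that both problems are co-NP-complete and, in particular, polynomial-time equivalent.

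The main obstacle is the gadget engineering itself. Unlike the usual dynamical Game-of-Life constructions that move gliders through time, all logic here must be encoded \emph{statically} in a single predecessor step: the only freedom is the choice of preimage cells, constrained cell-by-cell through the rigid totalistic birth/survival rule. Designing gates that are simultaneously \emph{sound} (no spurious preimage slips through the many degrees of freedom of a $3\times 3$ neighbourhood) and \emph{complete}, and in particular building a reliable crossover or eliminating it through a planar reduction, is the delicate part, and would most plausibly be carried out and verified by computer search, much as the trace computations of this paper were done by machine.
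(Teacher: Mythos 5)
This statement is a \emph{conjecture} in the paper: the authors prove only the upper bounds (membership in co-NP, via Lemma~\ref{lem:StableImplies} and Lemma~\ref{lem:PeriodizableImplies}, plus the polynomial-time reduction of $\FinG(g)$ to rectangular orphans) and explicitly leave co-NP-hardness open, suggesting as a possible route the encoding of Wang tiles into preimages and a reduction from untileability of a rectangle. So there is no proof in the paper to compare against, and your text should be judged as an attack plan for an open problem. As such it is sensible and essentially the same strategy the authors themselves sketch (your Circuit-SAT gadget library is the circuit-flavoured version of their Wang-tile encoding), but it is not a proof: the entire mathematical content --- a verified gadget library with two-sided (soundness and completeness) local correctness --- is missing, and you say so yourself in the last paragraph. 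Until at least one forced ``wire'' exists, there is no evidence that the Game of Life's single-step preimage relation is expressive enough to simulate arbitrary circuits; rigidity results like the forced $3$-periodic continuation in Proposition~\ref{prop:AsympPeriodicNotDense} are encouraging but far from a functionally complete gate set.

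Two specific points in your sketch deserve sharpening. First, your buffering argument is weaker than stated: a region whose \emph{image} is all $0$ does not force the \emph{preimage} to be anything like $0$ --- $g^{-1}(0^{\Z^2})$ is a rich SFT (e.g.\ the all-$1$ configuration lies in it), so arbitrary ``garbage'' can cross your buffers and potentially unlock a gadget; soundness must therefore be proved against all configurations of this SFT, not just the intended ones, and similarly in the orphan case the preimage cells near the rectangle's boundary are completely unconstrained by the image. Second, the appeal to Theorem~\ref{thm:GoLPeriodizable} for the finite-support case is the wrong citation for what you need: the relevant tool is the second bullet of Theorem~\ref{thm:Main} (if $\pad^4(P_\Phi)$ has a preimage pattern then $\conf(P_\Phi)$ has a global preimage), which does correctly give you the completeness direction, while soundness transfers by restricting a global preimage to the rectangle. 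With those repairs your reduction \emph{schema} is coherent, and note also that hardness of orphans alone would not yield hardness of $\FinG(g)$, since the paper has a reduction only in the direction $\FinG(g) \leq$ orphans --- so your separate treatment of the two problems is genuinely necessary. But as it stands the proposal establishes nothing beyond what the paper already conjectures.
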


The doubly periodic Gardens of Eden give another natural decision problem, and we conjecture that it is not computable.
This would in particular imply that not all semilinear configurations admit semilinear preimages. 

\begin{conjecture}
The doubly periodic Gardens of Eden for the Game of Life are an undecidable set. 
\end{conjecture}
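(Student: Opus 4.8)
The plan is to establish undecidability by reducing an undecidable combinatorial problem---most naturally the domino (Wang tiling) problem, whose non-tileability form is $\Sigma^0_1$-complete---to the question of whether a doubly periodic configuration is a Garden of Eden. Concretely, I would try to build a computable map $T \mapsto y_T$ sending a finite Wang tile set $T$ to a doubly periodic configuration $y_T \in \{0,1\}^{\Z^2}$ such that $y_T$ admits a $g$-preimage if and only if $T$ tiles the plane. Then $y_T$ is a Garden of Eden exactly when $T$ does not tile, so the doubly periodic Gardens of Eden would be $\Sigma^0_1$-hard; combined with the fact that every Garden of Eden contains an orphan (so the set is $\Sigma^0_1$), this would give $\Sigma^0_1$-completeness, and in particular undecidability.

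The reason this is the right shape of reduction is that a preimage of a doubly periodic $y_T$ need not itself be periodic: after blocking by the period lattice $L$ of $y_T$, the set $g^{-1}(y_T)$ becomes a two-dimensional subshift of finite type over the finite alphabet $\{0,1\}^{\Z^2/L}$, whose forbidden patterns are dictated by the Game of Life rule and by $y_T$. Nonemptiness of such an SFT is undecidable in general, and crucially it does \emph{not} reduce to a finite torus computation, since the SFT may be nonempty only through aperiodic preimages---mirroring the aperiodic Wang tilings and explaining why the problem escapes the decidability arguments of the preceding sections. The task is therefore to realize the tiling SFT of $T$, up to a computable recoding, as $g^{-1}(y_T)$ for a suitable periodic $y_T$.

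The encoding step is where the work lies. I would exploit the computational universality of the Game of Life: lay out a doubly periodic ``skeleton'' in the image $y_T$---a rigid grid of walls repeating with period $L$---so that any preimage $x$ is forced to reproduce this skeleton under one application of $g$, while the cells left underdetermined by the skeleton serve as registers encoding a tile of $T$ at each grid position. The rigidity required is reminiscent of the period-forcing patterns $P_n$ from the proof of Proposition~\ref{prop:AsympPeriodicNotDense}: these occur in $g^{-1}(0^{\Z^2})$ and pin down their columns completely, and analogous gadgets would furnish the walls. The single-step constraint $g(x) = y_T$ must then be arranged to enforce the horizontal and vertical compatibility relations of $T$ across neighboring grid cells, so that globally consistent fillings correspond precisely to valid tilings, and a valid (possibly aperiodic) tiling yields a genuine preimage continued $3$-periodically or with $0$s away from the computation, exactly as in Proposition~\ref{prop:AsympPeriodicNotDense}.

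The main obstacle is precisely the tension between two constraints we cannot both relax: the image $y_T$ must be \emph{exactly} doubly periodic, yet its preimages must retain enough freedom to encode an arbitrary, potentially aperiodic tiling, and no spurious preimage may satisfy $g(x) = y_T$ while violating the intended tile semantics. Because the local rule of $g$ is fixed and cannot be tailored to $T$, all combinatorial work is pushed into the design of $y_T$ and of the Life gadgets realizing walls, registers, and compatibility checks; proving soundness (every tiling gives a preimage) should be routine, but proving completeness---that every preimage decodes to a legal tiling, with no cheating solutions slipping through the fixed rule---is the crux. A plausible route is to first establish a clean intrinsic-simulation statement for $g$ at the level of preimages (that $g$-preimages of periodic points faithfully emulate those of an auxiliary, purpose-built CA encoding the tiling rules) and then invoke undecidability of emptiness for the emulated system; turning the known glider-based universality of Life into such a preimage-faithful, periodicity-preserving simulation is the step I expect to be hardest.
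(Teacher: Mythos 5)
First, a framing point: the statement you were asked to prove is stated in the paper as a \emph{conjecture}, and the paper contains no proof of it. The authors' only comment on how one might prove it is exactly the route you sketch --- ``encode arbitrary Wang tiles into preimages of rectangular patterns, and reduce \dots to the undecidable problem of tileability of the infinite plane'' \cite{Be66}. So your high-level plan coincides with the paper's suggested method, and your surrounding observations are sound: the set of doubly periodic Gardens of Eden is $\Sigma^0_1$ by the orphan/compactness argument, the preimage set $g^{-1}(y_T)$ of a doubly periodic $y_T$ is (after blocking by the period lattice) a $\Z^2$-SFT whose nonemptiness cannot be settled by a torus computation, and the period-forcing patterns $P_n$ of Proposition~\ref{prop:AsympPeriodicNotDense} are the natural candidates for rigid ``wall'' gadgets.

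However, what you have written is a research program, not a proof, and the gap is precisely the step you yourself flag as hardest: the construction of a computable map $T \mapsto y_T$ together with a \emph{completeness} argument that every $x$ with $g(x) = y_T$ decodes to a valid $T$-tiling. Nothing in the paper (or in known Game-of-Life technology) supplies this. Glider-based universality of Life concerns forward dynamics over many time steps and gives no control over the one-step preimage relation; the rigidity results available (the columns forced by $P_n$) force content in preimages of the all-zero configuration, but they force \emph{complete} determination, whereas your registers need regions that are constrained to a controlled finite set of fillings whose cross-cell consistency encodes the Wang matching relations --- a qualitatively different and unestablished kind of gadget. Without exhibiting these gadgets and ruling out ``cheating'' preimages under the fixed Life rule, no reduction exists, and the undecidability claim remains exactly as open as the paper leaves it. Your proposal correctly identifies the shape of a future proof but does not close the conjecture.
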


One possible method of proving these conjectures would be to encode arbitrary Wang tiles into preimages of rectangular patterns, and reduce to the co-NP-complete problem of untileability of a rectangular region~\cite{GaJo79}, or to the undecidable problem of tileability of the infinite plane~\cite{Be66}.

In the encoding we use, the input specifies the values in an area of polynomial size. One may wonder what happens if this input is given in other ways.

\begin{question}
Given a finite tuple of vectors $(v_1, ..., v_n)$ written in binary, what is the computational complexity of checking whether the finite-support configuration with support $\{v_i \;|\; i = 1, ..., n\}$ is a Garden of Eden?
\end{question}

One may wonder if the Game of Life is special, or whether these results are true for a larger class of CA. The Game of Life has strong symmetry properties, so it is a natural candidate to look at, and possibly the automata-theoretic problems are particularly easy for this reason. However, as far as we know there is no reason to believe one-step properties of the Game of Life should be special among, for example, totalistic radius-$1$ rules with the Moore neighborhood. It would be interesting to go through a larger set of rules, and analyze the stability and periodizability properties of their traces. Of course, if the trace of a given SFT happens to be non-sofic (and in particular not stable), then our methods cannot be applied to it. Even if the trace is sofic but not stable, we do not have a general method of determining it.

We also do not know if our results apply to powers of the Game of Life.
If finite-support configurations that are not Gardens of Eden for the Game of Life would always have finite-support preimages, or if semilinear such configurations would always have semilinear preimages, then we would obtain decidability of finite-support Gardens of Eden for all powers of the Game of Life. 
It is also unknown if all powers of the Game of Life have different sets of Gardens of Eden, i.e.\ whether the Game of Life is \emph{stable} in the terminology of \cite{Ma95}; according to the LifeWiki website the first $4$ powers have been separated in this sense~\cite{LW-GFP19}.

\bibliographystyle{plain}
\bibliography{../../../bib/bib}{}

\def\ocirc#1{\ifmmode\setbox0=\hbox{$#1$}\dimen0=\ht0 \advance\dimen0
  by1pt\rlap{\hbox to\wd0{\hss\raise\dimen0
  \hbox{\hskip.2em$\scriptscriptstyle\circ$}\hss}}#1\else {\accent"17 #1}\fi}
\begin{thebibliography}{10}

\bibitem{Ad10}
Andrew Adamatzky.
\newblock {\em Game of life cellular automata}, volume~1.
\newblock Springer, 2010.

\bibitem{ArBa09}
Sanjeev Arora and Boaz Barak.
\newblock {\em Computational complexity: a modern approach}.
\newblock Cambridge University Press, 2009.

\bibitem{Be66}
Robert Berger.
\newblock The undecidability of the domino problem.
\newblock {\em Mem. Amer. Math. Soc. No.}, 66, 1966.
\newblock 72 pages.

\bibitem{Bi08}
Armin Biere.
\newblock Pico{SAT} essentials.
\newblock {\em Journal on Satisfiability, Boolean Modeling and Computation},
  4:75--97, 2008.

\bibitem{Le06}
Pietro Di~Lena.
\newblock Decidable properties for regular cellular automata.
\newblock In Gonzalo Navarro, Leopoldo Bertossi, and Yoshiharu Kohayakawa,
  editors, {\em Fourth IFIP International Conference on Theoretical Computer
  Science- TCS 2006}, pages 185--196, Boston, MA, 2006. Springer US.

\bibitem{Ga70}
Martin Gardner.
\newblock {Mathematical Games: The Fantastic Combinations of John Conway's New
  Solitaire Game ``Life''}.
\newblock {\em Scientific American}, 223(4):120--123, 1970.

\bibitem{Ga83}
Martin Gardner, Martin Gardner, Martin Gardner, and Martin Gardner.
\newblock {\em Wheels, life, and other mathematical amusements}, volume~86.
\newblock WH Freeman New York, 1983.

\bibitem{GaJo79}
Michael~R. Garey and David~S. Johnson.
\newblock {\em Computers and Intractability: A Guide to the Theory of
  NP-Completeness}.
\newblock W. H. Freeman \& Co., New York, NY, USA, 1979.

\bibitem{GiSp66}
Seymour Ginsburg and Edwin Spanier.
\newblock Semigroups, presburger formulas, and languages.
\newblock {\em Pacific Journal of Mathematics}, 16(2):285--296, feb 1966.

\bibitem{LW-GFP19}
{Grandfather problem -- LifeWiki}.
\newblock \url{https://www.conwaylife.com/wiki/Grandfather_problem}.
\newblock Accessed: 2019-12-01.

\bibitem{He69}
Gustav~A. Hedlund.
\newblock Endomorphisms and automorphisms of the shift dynamical system.
\newblock {\em Math. Systems Theory}, 3:320--375, 1969.

\bibitem{HoMoUl06}
John~E. Hopcroft, Rajeev Motwani, and Jeffrey~D. Ullman.
\newblock {\em Introduction to Automata Theory, Languages, and Computation (3rd
  Edition)}.
\newblock Addison-Wesley Longman Publishing Co., Inc., Boston, MA, USA, 2006.

\bibitem{LiMa95}
Douglas Lind and Brian Marcus.
\newblock {\em An introduction to symbolic dynamics and coding}.
\newblock Cambridge University Press, Cambridge, 1995.

\bibitem{Ma95}
Alejandro Maass.
\newblock On the sofic limit sets of cellular automata.
\newblock {\em Ergodic Theory and Dynamical Systems}, 15, 1995.

\bibitem{OrphanvsGoE}
{orphan pattern / garden of eden}.
\newblock \\\url{https://conwaylife.com/forums/viewtopic.php?f=2&t=797}.
\newblock \\Accessed: 2019-12-03.

\bibitem{Pa12}
Ronnie Pavlov.
\newblock Approximating the hard square entropy constant with probabilistic
  methods.
\newblock {\em Ann. Probab.}, 40(6):2362--2399, 11 2012.

\bibitem{PaSc15}
Ronnie Pavlov and Michael Schraudner.
\newblock Classification of sofic projective subdynamics of multidimensional
  shifts of finite type.
\newblock {\em Transactions of the American Mathematical Society},
  367(5):3371--3421, 2015.

\bibitem{PePi04}
D.~Perrin and J.{\'E}. Pin.
\newblock {\em Infinite Words: Automata, Semigroups, Logic and Games}.
\newblock Pure and Applied Mathematics. Elsevier Science, 2004.

\bibitem{Re02}
Paul Rendell.
\newblock Turing universality of the game of life.
\newblock In {\em Collision-based computing}, pages 513--539. Springer, 2002.

\bibitem{Zy19}
Dominik van~der
  Zypen~(\url{https://mathoverflow.net/users/8628/dominic-van-der-zypen}).
\newblock Decision problems for which it is unknown whether they are decidable
  -- {MathOverflow}.
\newblock \url{https://mathoverflow.net/q/345388}.
\newblock Accessed: 2019-11-15.

\end{thebibliography}

\newpage
\appendix

\section{Stable periodizable traces and $C = 0$ by computer}
\label{sec:Program}

\VerbatimInput[fontsize=\footnotesize]{anc/sft-traces.py}



\end{document}